%%%%%%%%%%%%%%%%%%%%%%%%%%%%%%%%%%%%%%%%%%%%
%!TeX spellcheck = en-US

\documentclass%[12pt]
{amsproc}
\usepackage[english]{babel}
\usepackage{amsmath}
\usepackage{amsthm}
\usepackage{amssymb}
\usepackage{graphicx}
\usepackage{multirow}
\usepackage{adjustbox}
\usepackage{caption}
\captionsetup[table]{skip=8pt}
\setlength\intextsep{3mm}

\usepackage{subfig}
\usepackage{enumitem}

\makeatletter
\renewcommand*\env@matrix[1][*\c@MaxMatrixCols c]{%
  \hskip -\arraycolsep
  \let\@ifnextchar\new@ifnextchar
  \array{#1}}
\makeatother

\DeclareGraphicsExtensions{.png,.pdf,.eps}

\usepackage[all,2cell]{xy}
\usepackage{tikz}
\usepackage{pgf}
\usepackage{rotating}
\usepackage{enumerate}
\usetikzlibrary{cd}

\usepackage{array}
\newcolumntype{H}{>{\setbox0=\hbox\bgroup}c<{\egroup}@{}}
\newcolumntype{L}{>{$}l<{$}} % math-mode version of "l" column type
\newcolumntype{C}{>{$}c<{$}} % math-mode version of "c" column type

\usepackage{hyperref}

%\usepackage{titlesec}
%\titleformat{\part}[hang]{\normalfont\scshape}{\thepart.}{1em}{}

\newtheorem*{theorem*}{Theorem}
\newtheorem*{corollary*}{Corollary}
\newtheorem{theorem}{Theorem}[section]
\newtheorem{lemma}[theorem]{Lemma}
\newtheorem{corollary}[theorem]{Corollary}
\newtheorem{proposition}[theorem]{Proposition}

\theoremstyle{definition}

\newtheorem{definition}[theorem]{Definition}
\newtheorem{notation}[theorem]{Notation}

\newtheorem{remark}[theorem]{Remark}

\newtheorem{example}[theorem]{Example}

%%%%%%%%%%%%%
%\newcommand\testname{\sc Summary}
%
%\newenvironment{summary}{%
%    \small
%    \begin{center}%
%        {\bfseries \testname\vspace{-.5em}\vspace{0pt}}%
%    \end{center}%
%    \quotation}
%    {\endquotation}
%%%%%%%%%%%%%

%\numberwithin{equation}{section}

\setcounter{tocdepth}{1}
\setcounter{secnumdepth}{4}

\title[Inversion maps and torus actions on RH varieties]{Inversion maps and torus actions on rational homogeneous varieties}

\author[Franceschini]{Alberto Franceschini}
\address{Dipartimento di Matematica, Universit\`a degli Studi di Trento, via
Sommarive 14 I-38123 Povo di Trento (TN), Italy}
 
\email{alberto.franceschini@unitn.it, eduardo.solaconde@unitn.it}

\author[Sol\'a Conde]{Luis E. Sol\'a Conde}

\subjclass[2010]{Primary 14L30; Secondary 14E30, 14L24, 14M17}

\thanks{First author supported by the PhD program in Mathematics of the University of Trento. Second author is supported by PRIN project ``Geometria delle variet\`a algebriche''. }

%%%%%%%%%%%%%%%%%% JW input
%%%%%%
%%%%%%  Calligraphic, bold, etc 
%%%%%% 

\def\C{{\mathbb C}}

\def\P{{\mathbb P}}
\def\Q{{\mathbb Q}}
\def\R{{\mathbb R}}
\def\Z{{\mathbb Z}}

\def\cB{{\mathcal B}}
\def\cC{{\mathcal C}}
\def\cD{{\mathcal D}}
\def\cE{{\mathcal E}}

\def\cG{{\mathcal{G}}}

\def\cK{{\mathcal K}}

\def\cN{{\mathcal{N}}}
\def\cO{{\mathcal{O}}}

\def\cQ{{\mathcal{Q}}}

\def\cS{{\mathcal S}}

\def\cY{{\mathcal Y}}
\def\Q{{\mathbb{Q}}}

\def\fg{{\mathfrak g}}
\def\fh{{\mathfrak h}}
\def\fp{{\mathfrak p}}
\def\fb{{\mathfrak b}}

%%%%%%
%%%%%%  Dynkin diagrams
%%%%%% 

\def\DA{{\rm A}}
\def\DB{{\rm B}}
\def\DC{{\rm C}}
\def\DD{{\rm D}}
\def\DE{{\rm E}}

%%%%%%
%%%%%%  Mathematical symbols
%%%%%% 

\DeclareMathOperator{\HH}{H}

\def\operatorname#1{\mathop{\rm #1}\nolimits}

\def\Proj{\operatorname{Proj}}

\def\Pic{\operatorname{Pic}}
\def\Hom{\operatorname{Hom}}

\def\id{\operatorname{id}}

\def\rank{\operatorname{rank}}

\def\conj{\operatorname{conj}}

\def\Mov{{\operatorname{Mov}}}

\def\ex{{\operatorname{ex}}}

\def\Ad{\operatorname{Ad}}

\def\PGL{\operatorname{PGL}}

\def\Sec{\operatorname{Sec}}

\DeclareMathOperator{\No}{N}

\newcommand{\pb}{\ar@{}[dr]|{\text{\pigpenfont J}}}
\def\Mo{\operatorname{\hspace{0cm}M}}

\def\point{\operatorname{pt}}

%%%%%%
%%%%%%  Other stuff
%%%%%% 

\renewcommand{\epsilon}{\varepsilon}
\renewcommand{\phi}{\varphi}

\newcommand\ra{{\ \rightarrow\ }}
\newcommand\lra{\longrightarrow}

\newcommand\ignore[1]{}

\newcommand{\shse}[3]{0 ~\ra ~#1~ \lra ~#2~ \lra ~#3~ \ra~ 0}
\makeatletter
\newcommand{\xleftrightarrow}[2][]{\ext@arrow 3359\leftrightarrowfill@{#1}{#2}}
\makeatother
\newcommand{\xdasharrow}[2][->]{
\tikz[baseline=-\the\dimexpr\fontdimen22\textfont2\relax]{
\node[anchor=south,font=\scriptsize, inner ysep=1.5pt,outer xsep=2.2pt](x){#2};
\draw[shorten <=3.4pt,shorten >=3.4pt,dashed,#1](x.south west)--(x.south east);
}}

%% big wedge

%%%%%%%%%%%%%%%%%%

\begin{document}
\begin{abstract}
Complex projective algebraic varieties with $\C^*$-actions can be thought of as geometric counterparts of birational transformations. In this paper we describe geometrically the birational transformations associated to rational homogeneous varieties endowed with a $\C^*$-action with no proper isotropy subgroups.
\end{abstract}
\maketitle
\tableofcontents

%%%%%%%%%%%%%%%%%%
%%%%%%%%%%%%%%%%%%
%%%%%%%%%%%%%%%%%%
%%%%%%%%%%%%%%%%%%

\section{Introduction}\label{sec:intro}

The transition from classic projective to modern birational geometry in the 1980s left algebraic geometry with the task of understanding brand new birational transformations such as flips and flops. 
An interesting idea, that sprung out of the work of Reid \cite{ReidToric}, Thaddeus \cite{Thaddeus1996,Thaddeus} and W{\l}odarczyk \cite{Wlodarczyk}, is that these transformations may be understood by means of actions of tori: given a birational map $\psi$ between complex projective varieties, there exists an algebraic variety $X$ (not proper, in general) admitting a $\C^*$-action such that $\psi$ is the map induced among two GIT-quotients of $X$ by $\C^*$. 

On the other hand, if one starts with a projective variety $X$ admitting a nontrivial $\C^*$-action, then one may consider the geometric quotients of the action and the induced birational map among them. 
Among these quotients, two distinguished ones are particularly relevant: starting with the general point $x\in X$, we call \emph{sink} and \emph{source} of the action the (unique) fixed-point components $Y_-,Y_+$ that contain, respectively, $\lim_{t\to 0}t^{-1}\cdot x$ and  $\lim_{t\to 0}t\cdot x$. Considering the {\em Bia{\l}ynicki-Birula cells},
\[
X^{\pm}(Y_{\pm}):=\left\{x\in X: \lim_{t\to 0}t^{\pm 1} \cdot x\in Y_{\pm}\right\},
\]
the quotients $\cG_{\pm}:=\left(X^{\pm}(Y_{\pm})\setminus Y_{\pm}\right)/\C^*$ are geometric quotients of $X$ and, in the case in which $X$ is smooth  (see \cite{BB,kon}), $\cG_{\pm}$ are weighted projective bundles over $Y_{\pm}$.
The nonempty intersection $X^{-}(Y_{-})\cap X^{+}(Y_{+})$ defines a birational map $\psi_a:X^{-}(Y_{-})\dashrightarrow  X^{+}(Y_{+})$ that descends via the quotient by $\C^*$ to a birational map
\[
\psi:\cG_-\dashrightarrow \cG_+,
\]
that we call the {\em birational map induced by the $\C^*$-action}. This map encodes many geometric properties of $X$; for instance, it is equivariant with respect to the action of the centralizer of $\C^*$ in the group of automorphisms of $X$. 
When the group of automorphisms of $X$ is large (for instance if $X$ is rational homogeneous), the list of possible associated birational maps is expected to be small.

Note that when $X$ is smooth and the $\C^*$-action has no finite stabilizers (in our language we say that the action is \emph{equalized}), $\cG_\pm$ are smooth varieties. 
One can describe $\psi$ as a sequence of Atiyah flips, blowups and blowdowns (cf. \cite{WORS3}). 

The goal of this paper is to study equalized $\C^*$-actions on some rational homogeneous varieties (\emph{RH variety}, for short) of Picard number one $G/P$ ($G$ semisimple algebraic group, $P$ maximal parabolic subgroup) and to describe their associated birational maps. 
A key property of these transformations is that they are equivariant with respect to a certain reductive subgroup $G_0\subset G$. 

\begin{example}
The simplest example is the projectivization of the inversion map of matrices $$
\psi:\P(M_{n\times n})\dashrightarrow \P(M_{n\times n}),\qquad [A]\longmapsto [A^{-1}],
$$ 
which is $\PGL(n)$-equivariant.  
It was already noted by Thaddeus in \cite[Section 4]{Thaddeus} that this birational map is induced by a certain $\C^*$-action on $X$, the Grassmannian of $n$-dimensional subspaces of a $2n$-dimensional complex vector space. The restriction of this action to a certain $\C^*$-invariant subvariety $(\P^1)^n \subset X$ has the standard Cremona transformation 
\[
\psi:\P^{n-1}\dashrightarrow\P^{n-1},\qquad \left(x_1:\ldots:x_{n}\right)\longmapsto\left(x_1^{-1}:\ldots:x_n^{-1}\right), 
\]
as the associated birational map. This map, which can be thought of as the projectivization of the inversion of diagonal $n\times n$ matrices, is equivariant with respect to the action of an algebraic torus $(\C^*)^{n-1}$.
\end{example}

Thaddeus' observation -- and its analogous formulation in the case of the inversion of symmetric matrices, cf. \cite[Lemma 3.6]{MMW} -- can be extended to other $\C^*$-actions on rational homogeneous varieties. Roughly speaking, we expect the associated birational maps to behave like ``inversion maps'' for certain algebraic structures on $X^{\pm}(Y_{\pm})$. 

This idea is particularly clear in the case in which the action is equalized and the extremal fixed-point components $Y_{\pm}$ are isolated points, i.e. in the case that the induced birational map is a Cremona transformation:
$$
\psi:\cG_-=\P(T_{X,Y_-})\dashrightarrow \P(T_{X,Y_+})=\cG_+.
$$
In this situation we will prove the following:

\begin{theorem}\label{thm:Jordan}
Consider an equalized $\C^*$-action on an RH variety $X$ of Picard number one with isolated sink and source, let $\psi:\P(T_{X,Y_-})\dashrightarrow \P(T_{X,Y_+})$ be the Cremona transformation induced by the action. Then there exists a unique structure of Jordan algebra on $T_{X,Y_-}$ and a linear isomorphism $\alpha:T_{X,Y_+}\to T_{X,Y_-}$, such that $\jmath :=\P(\alpha)\circ \psi$ is the projectivization of the corresponding inversion map in $T_{X,Y_-}$.
\end{theorem}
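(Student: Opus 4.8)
The plan is to translate the geometry of the $\C^*$-action into the representation theory of a graded Lie algebra, and then to invoke the Tits--Kantor--Koecher (TKK) dictionary between $3$-graded simple Lie algebras and simple Jordan algebras, realizing $\psi$ as the group-theoretic transition between the two Bia{\l}ynicki-Birula charts.

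First I would reduce to a Lie-theoretic picture. Since the connected automorphism group of a Picard-number-one RH variety is semisimple and agrees with $G$ in the cases at hand, the $\C^*$-action is given by a one-parameter subgroup $\lambda\colon\C^*\to G$, which I may take inside a maximal torus $T$. Decomposing $\fg$ into $\ad(\lambda)$-eigenspaces yields a $\Z$-grading $\fg=\bigoplus_i\fg_i$ with reductive $G_0=Z_G(\lambda)$ and $\Lie(G_0)=\fg_0$. Placing the sink $Y_-$ at the base point, the $\lambda$-weights on $T_{X,Y_-}=\fg/\fp$ are strictly positive, and equalization forces them all to equal $+1$; the symmetric statement holds at the source $Y_+$ with weight $-1$. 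Using that both $Y_\pm$ are isolated, together with $\fg_0\subseteq\fp$, I would rule out weights $\pm 2$ and conclude that the grading is a $3$-grading $\fg=\fg_{-1}\oplus\fg_0\oplus\fg_1$ with $\fp=\fg_{\le 0}$ of abelian nilradical $\fg_1$. Thus $X$ is cominuscule, $T_{X,Y_-}\cong\fg_1$ and $T_{X,Y_+}\cong\fg_{-1}$ as $G_0$-modules, and the two big cells are the affine charts cut out by $\exp(\fg_1)$ and $\exp(\fg_{-1})$.

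Next I would put the Jordan structure on $\fg_1$ via TKK. Extend the grading element to an $\fsl_2$-triple $(e,h,f)$ with $e\in\fg_1$ regular and $f\in\fg_{-1}$; Koecher's recipe then defines the product $x\cdot y=\tfrac12[[x,f],y]$ on $\fg_1$ (the symmetric triple product with middle term $f$), with unit $e$ for a suitably normalized $f$. The Jordan identity is a formal consequence of the Jacobi identity in the $3$-graded $\fg$, and $G_0$-equivariance of the bracket makes the resulting simple unital Jordan algebra $J=(\fg_1,\cdot\,,e)$ canonical. To compute $\psi\colon\P(T_{X,Y_-})\dashrightarrow\P(T_{X,Y_+})$, let $w\in N_G(T)$ represent the Weyl involution reversing the grading, $\Ad(w)\fg_i=\fg_{-i}$ (it exists precisely because both $Y_\pm$ are isolated, i.e. the marked node is $-w_0$-stable). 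For generic $v\in\fg_1$, expanding $w\cdot\exp(v)$ through the big-cell decomposition $\exp(\fg_{-1})\cdot G_0\cdot\exp(\fg_1)$ and reading the image in $X=G/P$ gives, by the Koecher--Loos inversion formula, $\psi([v])=[\alpha^{-1}(v^{-1})]$, where $v^{-1}\in\fg_1$ is the Jordan inverse in $J$ and $\alpha\colon\fg_{-1}\to\fg_1$ is the $G_0$-equivariant isomorphism induced by $\Ad(w)$ (equivalently, the one dual to the Killing pairing $\fg_1\times\fg_{-1}\to\C$). Consequently $\jmath=\P(\alpha)\circ\psi\colon[v]\mapsto[v^{-1}]$ is the projectivized inversion of $J=T_{X,Y_-}$.

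For uniqueness, I would argue that a finite-dimensional unital Jordan algebra is recovered from its inversion map: the quadratic representation $P(x)$ is read off from the second-order data of $\jmath$ at the unit, so any two Jordan structures making $\jmath$ the inversion differ by a unital Jordan automorphism, which $G_0$-equivariance and the fixed choice of $\alpha$ force to be the identity. The main obstacle is the explicit identification in the previous paragraph: verifying the Koecher--Loos identity in this geometric incarnation and controlling $\alpha$ so that no spurious scalar or automorphism survives. A secondary delicate point is the structural reduction of the first paragraph---excluding longer ($5$-term) gradings and the sporadic cases where $\Aut^0(X)\ne G$---so that a genuine $3$-grading, hence a \emph{unital} Jordan algebra rather than a mere Jordan pair, really does arise.
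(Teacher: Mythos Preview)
Your proposal is correct and reaches the same conclusion, but by a genuinely different route from the paper. The paper does not construct the Jordan product directly; instead, after establishing (as you do) that the hypotheses force a short grading $\fg=\fg_-\oplus\fg_0\oplus\fg_+$ and that this grading is \emph{balanced} (i.e.\ $\conj_{w_0}(t)=t^{-1}$ on $H_i$, which is exactly your ``$-w_0$-stable marked node'' condition), it defines $\jmath$ purely geometrically as the transition map between the two big cells composed with $\Ad_{w_0}$, checks that $\jmath$ satisfies Springer's three axioms (A), (B), (C) for a \emph{J-structure}, and then invokes \cite[2.21]{Spr73}, which manufactures the unique Jordan algebra having $\jmath$ as its inversion. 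Your route instead builds the Jordan product first via the TKK recipe $x\cdot y=\tfrac12[[x,f],y]$ from an $\fsl_2$-triple, and then verifies, via the Koecher--Loos big-cell identity, that the resulting inversion coincides with $\jmath$.

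What each approach buys: the paper's argument is shorter and cleaner---condition (A) is equalization, (B) is the homogeneity $\jmath(tx)=t^{-1}\jmath(x)$ (immediate from balancedness), and (C) is that $G_0$ has an open orbit on $\fg_-$ (cited from \cite{Tev05}); uniqueness then comes for free from Springer. Your approach is more explicit and self-contained, and makes the TKK dictionary visible, but the two points you yourself flag as delicate---the Koecher--Loos computation identifying $\psi$ with Jordan inversion, and the uniqueness step ``automorphism forced to be the identity by $G_0$-equivariance''---require real work that the paper simply outsources to Springer's monograph. Both arguments are sound; they differ mainly in where the black box is placed.
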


The precise way in which $\psi$ can be constructed upon the map $\jmath$ will be described in Section \ref{sec:j-str}. 
Our result can be considered a reformulation of \cite[2.21]{Spr73} into the language of $\C^*$-actions.
 Note that in the same text the author shows how the inverse map completely determines the structure of Jordan algebra and that any simple Jordan algebra appears this way. 
 A key point in the proof of Theorem \ref{thm:Jordan} will be the fact that the $\C^*$-actions considered in the statement are completely determined by a grading on the Lie algebra $\fg$ of $G$; the hypotheses of Theorem \ref{thm:Jordan} can be rephrased by saying that the grading is \emph{short} (see Section \ref{sssec:shortgr}) and \emph{balanced} (see Section \ref{sec:j-str} for the precise definition). 

Given a short and balanced grading on a simple Lie algebra $\fg$, one may consider the actions induced on the other RH $G$-varieties of Picard number one (not only those on which the sink and the source are isolated) and study the induced birational maps. 
As in Theorem \ref{thm:Jordan}, up to composition with a biregular map, we obtain a birational involution of the corresponding geometric quotient $\cG_-$, which in this case is the projectivization of a homogeneous vector bundle over an RH variety.
When $G$ is of classical type, we study explicit descriptions of the birational map $\psi$, obtaining the following (we refer to Section \ref{ssec:notclassic} for the notations):   

\begin{theorem}\label{thm:unbalanced}
	Let $G$ be a simple algebraic group whose Lie algebra $\fg$ is of classical type and admits a short and balanced grading induced by $\sigma$. Let $X$ be an RH $G$-variety of Picard number one endowed with the $\C^*$-action determined by $\sigma$. Assume that the extremal fixed point components of the action are not isolated points. 
	Then the induced birational transformation $\psi:\cG_- \dashrightarrow \cG_+$ is a small $\Q$-factorial modification, uniquely determined by the geometric quotients $\cG_\pm$ of $X$ (up to automorphisms of $\cG_\pm$). Their complete list is the following:
\begin{itemize}[leftmargin=*]
\item $X = \DA_{2n-1}(k)$ is the Grassmannian of $k$-dimensional linear subspaces in a $2n$-dimensional vector space $V$, $k \le n$, endowed with a $\C^*$-action producing a weight  decomposition $V=V_- \oplus V_+$ with $\dim V_\pm=n$. The extremal fixed point components $Y_\pm$  are Grassmannians $\DA_{n-1}(k)$ of $k$-dimensional linear subspaces of $V_\pm$, and $\cG_\pm\simeq \P\big(\cS_\pm^\vee \otimes V_\mp \big)$, where $\cS_\pm$ denote the corresponding universal bundles (of rank $k$) on $Y_\pm$.

\item $X= \DC_n(k)$ (resp. $X \simeq \DD_n(k)$ with $k \le n-2$ or $X \simeq \DD_n(n)$, $n$ even) is the symplectic (resp. orthogonal) Grassmannian of isotropic subspaces in a $2n$-dimensional vector space $V$, endowed with a $\C^*$-action producing a weight decomposition $V=V_- \oplus V_+$ with $\dim V_\pm=n$, $V_\pm$ isotropic. Then $Y_\pm$ are Grassmannians $\DA_{n-1}(k)$ of $k$-dimensional linear subspaces of $V_\pm$ and $\cG_\pm\simeq\P\big(\cN_{Y_-|X}\big)$, where 
		the normal bundles $\cN_{Y_\pm|X}$ are non-trivial extensions of
			\[
			\shse{\left( \cS_\pm \otimes \cQ_\pm \right)^\vee}{\cN_{Y_\pm|X}}{\cC_\pm},
			\]
			and, being $\cS_\pm,\cQ_\pm$ the corresponding universal bundles (of rank $k$ and $n-k$, respectively) on $Y_\pm$,
			\[\cC_\pm:=\begin{cases}
				S^2 \cS_\pm^\vee & \text{if }X \simeq \DC_n(k), \\
				\wedge^2 \cS_\pm^\vee & \text{if }X \simeq \DD_n(k).
				\end{cases}
			\]	

\item $X = \DD_n(n-1)$ is the orthogonal Grassmannian of isotropic subspaces in a $2n$-dimensional vector space $V$ ($n$ even), endowed with a $\C^*$-action producing a weight decomposition $V=V_- \oplus V_+$ with $\dim V_\pm=n$, $V_\pm$ isotropic. Then $Y_\pm\simeq \DA_{n-1}(n-1)$ are projective spaces parametrizing hyperplanes in $V_\pm$, and $\cG_\pm\simeq \P\big(\bigwedge^2 T_{Y_\pm}(-2)\big)$.

\item $X= \DB_n(k)$ (resp. $X= \DD_n(k)$) is the orthogonal Grassmannian of isotropic subspaces of a $(2n+1)$-dimensional (resp. $2n$-dimensional) vector space $V$, endowed with a $\C^*$-action producing a weight decomposition $V=V_-\oplus V_0\oplus V_+$ (corresponding to weights $-1,0,1$, respectively), $\dim(V_\pm)=1$. Then $Y_\pm \simeq \DB_{n-1}(k-1)$ (resp. $Y_\pm \simeq\DD_{n-1}(k-1)$ if $k \le n-2$, $(Y_-,Y_+)\simeq (\DD_{n-1}(n-2),\DD_{n-1}(n-1))$ if $k=n-1$, and $(Y_-,Y_+)\simeq (\DD_{n-1}(n-1),\DD_{n-1}(n-2))$ if $k=n$) are orthogonal Grassmannians of isotropic subspaces in $V_0$, and $\cG_\pm\simeq\P\big(\cQ_\pm\big)$, where $\cQ_\pm$ are the corresponding universal bundles on $Y_\pm$.
\end{itemize}
\end{theorem}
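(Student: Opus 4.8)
The plan is to prove the theorem by an explicit classification, exploiting the fact (recalled before Theorem~\ref{thm:Jordan}) that the whole $\C^*$-action is encoded by the grading $\sigma$. The first step is therefore to list all short and balanced gradings on a simple Lie algebra $\fg$ of classical type. A short grading $\fg=\fg_{-1}\oplus\fg_0\oplus\fg_1$ corresponds to a cominuscule node of the Dynkin diagram of $\fg$, so this reduces to a finite combinatorial check on the diagrams of types $\DA$, $\DB$, $\DC$, $\DD$; the balanced condition then singles out, in each type, the cocharacter $\sigma$ inducing the symmetric weight decomposition of the defining representation $V$ recorded in the statement (an equal splitting $V=V_-\oplus V_+$ for $\DA_{2n-1}$, $\DC_n$ and the even orthogonal families, and the decomposition $V=V_-\oplus V_0\oplus V_+$ with one-dimensional extremes for the quadric-type gradings of $\DB_n$ and $\DD_n$). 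Once $\sigma$ is fixed, every RH $G$-variety $X=G/P$ of Picard number one is realized as a Grassmannian of (isotropic) $k$-dimensional subspaces of $V$, and I would run through the admissible values of $k$ in each family; the case in which the extremal components are isolated is exactly the one excluded by hypothesis and treated in Theorem~\ref{thm:Jordan}.

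For each such $X$ the second step is a Bia\l ynicki--Birula analysis. The $\sigma$-fixed components of $X$ are the loci of (isotropic) $k$-planes $U$ that split as a direct sum of their weight pieces $U=U_-\oplus U_0\oplus U_+$; ordering them by the value of the moment map identifies the sink $Y_-$ and the source $Y_+$ as the two extreme components, which are precisely the RH varieties for the Levi $G_0$ listed in each item. The normal bundle $\cN_{Y_\pm|X}$ is then computed from the standard description of the tangent space of a (isotropic) Grassmannian, $T_{X,U}\subset\Hom(U,V/U)$, by isolating the weight pieces transverse to $Y_\pm$: in type $\DA$ this directly gives $\cS_\pm^\vee\otimes V_\mp$, while in the symplectic and orthogonal cases the isotropy condition forces a two-step filtration, recorded as the non-split extension $\shse{(\cS_\pm\otimes\cQ_\pm)^\vee}{\cN_{Y_\pm|X}}{\cC_\pm}$ with $\cC_\pm=S^2\cS_\pm^\vee$ or $\wedge^2\cS_\pm^\vee$. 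Crucially, because the grading is short the $\sigma$-weights on $\cN_{Y_\pm|X}$ are all equal to $1$, so the action is equalized along the extremes and the geometric quotient is the full projectivization $\cG_\pm=\P(\cN_{Y_\pm|X})$, as claimed.

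The third step is to show that $\psi$ is a small $\Q$-factorial modification and to pin down its uniqueness. Both $\cG_\pm$ are smooth projectivized bundles, hence $\Q$-factorial, so it suffices to prove that $\psi$ is an isomorphism in codimension one. I would do this by examining the inner fixed components (the split loci with $0<\dim U_+<k$): computing the dimensions of their ascending and descending Bia\l ynicki--Birula cells and checking that both have $\codim\ge 2$ in $X$, so that each intermediate $\C^*$-quotient differs from the next by a flip rather than a divisorial contraction, whence the composite $\psi$ is small. The balanced hypothesis is what guarantees the symmetry between the two ends and identifies the opposite extremal quotient with $\cG_+$. For uniqueness, I would argue that $\psi$ is $G_0$-equivariant and restricts to the natural isomorphism on the common dense open subset of $\cG_-$ and $\cG_+$ descending from the core $X^-(Y_-)\cap X^+(Y_+)$; being the modification relating the two opposite extremal chambers, it is determined by this open identification, and any two such modifications of these bundles differ only by automorphisms of $\cG_\pm$.

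The main obstacle I anticipate is twofold and concentrated in the symplectic and orthogonal families. First, identifying $\cN_{Y_\pm|X}$ requires proving that the extension above is genuinely non-split and $G_0$-homogeneous, which is a nontrivial computation of the relevant $\Ext$ group over the Levi $G_0$. Second, the verification of smallness must be carried out uniformly across all admissible $k$ and through the boundary cases --- the spinor nodes and parity constraints of $\DD_n$ (the ranges $k=n-1,n$ with $n$ even, where $Y_-$ and $Y_+$ change type, as recorded in the last two items) --- where the cell-dimension count is most delicate and where one must rule out divisorial behaviour directly.
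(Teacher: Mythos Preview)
Your approach is broadly correct and parallels the paper's proof in its overall architecture: classify the short balanced gradings, identify the extremal fixed components via the Bia\l ynicki--Birula decomposition, compute their normal bundles, and read off $\cG_\pm$. The normal-bundle computations you outline (via the tangent-space description $T_{X,U}\subset\Hom(U,V/U)$ and its isotropic refinement) match the paper's treatment of the $H_n$-actions in types $\DA$, $\DC$, $\DD$.

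There are, however, two places where the paper takes a genuinely different route. First, for smallness and uniqueness the paper does not carry out the cell-by-cell codimension count you propose; instead it invokes a structural result from \cite{WORS3} (recorded as Proposition~\ref{prop:unique} and Corollary~\ref{cor:unique}) that the geometric quotients $\cG_k$ are Mori dream spaces, that the movable cone $\overline{\Mov(\cG_-)}$ decomposes into $\delta$ nef chambers indexed by the weight levels, and that $\psi$ is the composite of the Atiyah flips realizing the wall crossings. Smallness and uniqueness then fall out immediately from the MDS framework, with no need to examine individual inner components. Your approach would work, but it requires precisely the dimension checks you flag as delicate in your final paragraph; the paper's route sidesteps them entirely and is what makes the proof uniform across all $k$.

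Second, for the $H_1$-action on orthogonal Grassmannians (the last bullet of the statement), the paper does not compute $\cN_{Y_\pm|X}$ via the geometric filtration of the tangent space. Instead it uses the Lie-theoretic description of Lemma~\ref{lem:normals}: writing the fiber $N_-$ as a sum of root spaces indexed by $\{\beta\in\Phi^+:\sigma_1(\beta)>0,\ \sigma_k(\beta)>0\}$, computing their $H^\perp$-weights explicitly in the $\epsilon_i$-basis, and matching them against the weights of the fiber $V_0/W$ of the universal quotient bundle $\cQ_\pm$. This avoids having to unwind the isotropy condition on a flag containing the one-dimensional $V_\pm$, which your tangent-space approach would need to handle carefully. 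Both methods reach the same answer, but the root-combinatorial one is cleaner here and is also what the paper reuses for the $\DE_7$ case.
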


Besides the cases of simple Lie algebras of classical type, only the exceptional Lie algebra $\mathfrak e_7$ admits a short and balanced grading, that we will later denote by $\sigma_7$. The birational maps $\psi:\cG_- \dashrightarrow \cG_+$ induced by the corresponding $\C^*$-action on the RH varieties $\DE_7(k)$ can still be described in terms of certain universal bundles. 
We refer to Section \ref{ssec:E7} for the precise definitions of the vector bundles involved.

\begin{theorem}\label{thm:E7}
	Consider the short and balanced grading $\sigma_7$ on the Lie algebra $\mathfrak e_7$. Then the corresponding $\C^*$-action $H_7$ on the RH variety with Picard number one $\DE_7(k)$ is such that $Y_- \simeq \DE_6(k)$, $Y_+ \simeq \DE_6(s(k))$ (where $s(k)$ is symmetric node for the non-trivial automorphism of the Dynkin diagram $\DE_6$ and $s(7)=0$) and 
	\[
	\psi: \P \left( \cQ_k \right) \dashrightarrow \P \left( \cQ'_{s(k)} \right),
	\] 
	where $\cQ_k,\cQ'_{s(k)}$ are the quotient bundles over $Y_\pm$ arising, respectively, from the short exact sequences
	\begin{align*}
	\shse{\cS_k}{V(\omega_6) &\otimes \cO_{Y_-}}{\cQ_k},\\
	\shse{\cS'_{s(k)}}{V(\omega_1) &\otimes \cO_{Y_+}}{\cQ'_{s(k)}}.
	\end{align*}
\end{theorem}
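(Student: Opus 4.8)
The plan is to follow the template used to prove Theorem~\ref{thm:unbalanced} in the classical cases, with the copy of $\DE_6$ inside $\DE_7$ playing the role that the Levi subgroup played there. First I would make the grading $\sigma_7$ explicit. It is the grading of $\mathfrak e_7$ attached to the unique cominuscule node (node~$7$ in Bourbaki's numbering), short because that node occurs with coefficient one in the highest root; it gives $\fg = \fg_{-1} \oplus \fg_0 \oplus \fg_1$ with $\fg_0 = \mathfrak e_6 \oplus \C$ and $\fg_{\pm 1}$ the two $27$-dimensional minimal representations $V(\omega_6)$, $V(\omega_1)$ of $\DE_6$, which are dual to one another. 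The centralizer $G_0$ of the acting $\C^*$ thus has $\DE_6$ as semisimple part, so the extremal fixed-point components will be RH $\DE_6$-varieties of Picard number one.

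The first substantive step is to identify $Y_\pm$ together with the symmetry $s$. Since the action is induced by $\sigma_7$, its fixed-point components correspond to $G_0$-orbits on the relevant flag data, and the combinatorics of the weighted Dynkin diagram shows that both sink and source are $\DE_6$-varieties. The subtlety lies in the marking on the source: interchanging sink and source amounts to replacing $t$ by $t^{-1}$, hence to negating the grading, and on the sub-root-system of type $\DE_6$ this negation is realized by its nontrivial diagram automorphism. Equivalently, the swap $\fg_{-1}\leftrightarrow\fg_1$ exchanges the dual representations $V(\omega_6)$ and $V(\omega_1)$, which is exactly that outer automorphism. Tracking its effect on the marked node yields $Y_-\simeq\DE_6(k)$ and $Y_+\simeq\DE_6(s(k))$, the convention $s(7)=0$ absorbing the boundary case in which the marking on $\DE_7$ sits at the cominuscule node itself.

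Next I would compute the geometric quotients. By the Bia{\l}ynicki--Birula description recalled in the introduction, $\cG_\pm$ are projective bundles over $Y_\pm$---honest bundles, since the action is equalized---each being the projectivization of the subspace of $T_{X,Y_\pm}$ on which $\C^*$ acts with the appropriate sign. It then remains to decompose the normal bundle $\cN_{Y_\pm|X}$ as a homogeneous $\DE_6$-bundle and to recognize the relevant weight space. Representation-theoretically this is the restriction of $\fg_1$ (resp.\ $\fg_{-1}$) to the parabolic of $\DE_6$ cutting out $Y_-$ (resp.\ $Y_+$): the restriction of $V(\omega_6)$ (resp.\ $V(\omega_1)$) fits into the tautological sequence of the statement, with $\cS_k$ (resp.\ $\cS'_{s(k)}$) the universal subbundle and $\cQ_k$ (resp.\ $\cQ'_{s(k)}$) the universal quotient, giving $\cG_-\simeq\P(\cQ_k)$ and $\cG_+\simeq\P(\cQ'_{s(k)})$. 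That $\psi$ is exactly the induced birational map then follows formally from its construction, as in the classical cases.

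I expect the principal obstacle to be the explicit identification of these homogeneous bundles and the verification of the tautological sequences. For the classical groups of Theorem~\ref{thm:unbalanced} the analogous decompositions are visible from the description of isotropic Grassmannians as spaces of subspaces of $V$; for $\DE_6$ one must instead work with the minimal representations $V(\omega_1),V(\omega_6)$ and the geometry of the associated cominuscule varieties, where no elementary coordinate model is available. In particular the duality $V(\omega_6)\leftrightarrow V(\omega_1)$ imposed on the source side by $s$ must be propagated consistently, so that $\psi$ genuinely interchanges the two defining sequences, while the degenerate convention $s(7)=0$ has to be handled on its own. Carrying all of this through uniformly in $k$ is the most delicate point.
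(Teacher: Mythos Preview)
Your proposal is correct and follows essentially the same route as the paper: identify the Levi part $\fg_0=\mathfrak e_6\oplus\C$ with $\fg_{\pm 1}$ the dual $27$-dimensional representations, read off $Y_\pm$ from the general fixed-point machinery (the paper's Section~\ref{sssec:fpc} and Remark~\ref{rem:Y-Y+marked}, which encodes exactly the $w_0$/outer-automorphism argument you sketch), and then identify the normal bundles $\cN_{Y_\pm|X}$ as the homogeneous bundles $\cQ_k,\cQ'_{s(k)}$ by matching $P^\perp$-modules. The paper's own argument is in fact labeled a sketch: it invokes Lemma~\ref{lem:normals} to write $N_\pm$ as an explicit sum of root spaces and then verifies the weight match with {\tt SageMath}, which is precisely the ``explicit identification of these homogeneous bundles'' you flag as the principal obstacle; so your assessment of where the work lies is accurate, and the paper resolves it computationally rather than conceptually.
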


The contents of Theorems \ref{thm:unbalanced} and \ref{thm:E7} are summarized in Table \ref{tab:summary} below.

\begin{table}[ht!]
 \centering
 \begin{adjustbox}{angle=90}
 \renewcommand\arraystretch{2.3}
 \begin{tabular}{|c|c|c|c|c|c|c|}
 	\hline
 	$X$ & conditions & $\sigma$ & $Y_-$ & $\cG_-$ & $Y_+$ & $\cG_+$ \\
 	\hline
 	\hline
 	$\DA_{2n-1}(k)$ & $k \le n$ & $\sigma_n$ & $\DA_{n-1}(k)$ & $\P \left( \cS^\vee_- \otimes V_+\right)$ & $\DA_{n-1}(k)$ & $\P \left( \cS^\vee_+ \otimes V_-\right)$ \\
 	\hline
 	\hline
 	$\DB_{n}(k)$ &  & $\sigma_1$ & $\DB_{n-1}(k-1)$ & $\P\left(\cQ_-\right)$ & $\DB_{n-1}(k-1)$ & $\P \left(\cQ_+\right)$ \\
 	\hline
 	\hline
 	$\DC_n(k)$ & $k \le n-1$ & $\sigma_n$ & $\DA_{n-1}(k)$ & Proposition \ref{prop:normalHn}  
 	& $\DA_{n-1}(k)$ & Proposition \ref{prop:normalHn} 
 	\\
 	\hline
 	\hline
 	$\DC_n(n)$ &  & $\sigma_n$ & $\point$ & $\P \left( S^2 V_+\right)$ & $\point$ & $\P \left( S^2 V_-\right)$ \\
 	\hline
 	\hline
 	\multirow{2}*{$\DD_n(k)$} & \multirow{2}*{$k \le n-2$} & $\sigma_1$ & $\DD_{n-1}(k-1)$ & $\P\left(\cQ_-\right)$ & $\DD_{n-1}(k-1)$ & $\P \left(\cQ_+\right)$ \\
 	\cline{3-7}
 	& & $\sigma_n$ & $\DA_{n-1}(k)$ & Proposition \ref{prop:normalHn} 	& $\DA_{n-1}(k)$ & Proposition \ref{prop:normalHn}
 	\\
 	\hline
 	\hline
 	\multirow{2}*{$\DD_n(n-1)$} & & $\sigma_1$ & $\DD_{n-1}(n-2)$ & $\P\left(\cQ_-\right)$ & $\DD_{n-1}(n-1)$ & $\P \left(\cQ_+\right)$ \\
 	\cline{2-7}
 	& $n$ even & $\sigma_n$ & $\DA_{n-1}(n-1)$ & $\P\left(\bigwedge^2 T_{\DA_{n-1}(n-1)}(-2)\right)$ & $\DA_{n-1}(n-1)$ & $\P \left(\bigwedge^2 T_{\DA_{n-1}(n-1)}(-2)\right)$ \\
 	\hline
 	\hline
 	 \multirow{2}*{$\DD_n(n)$} & & $\sigma_1$ & $\DD_{n-1}(n-1)$ & $\P\left(\cQ_-\right)$ & $\DD_{n-1}(n-2)$ & $\P \left(\cQ_+\right)$ \\
 	\cline{2-7}
 	& $n$ even & $\sigma_n$ & $\point$ & $\P \left( \bigwedge^2 V_+\right)$ & $\point$ & $\P \left( \bigwedge^2 V_- \right)$ \\
 	\hline
 	\hline
 	$\DE_7(k)$ & & $\sigma_7$ & $\DE_6(k)$ & $\P \left( \cQ_k \right)$ & $\DE_6(s(k))$ & $\P \left( \cQ_{s(k)}'\right)$ \\
 	\hline
 \end{tabular}
 \end{adjustbox}
 \caption{Birational transformations induced by a short and balanced grading on an RH variety $X$ of Picard number one.\label{tab:summary}}
 \end{table}

\subsection*{Outline}
The structure of the paper is the following. We start with a section on background material on $\C^*$-actions on RH varieties (Section \ref{sec:prelim}). Then in Section \ref{sec:equalGrass} we study the fixed-point components of an equalized $\C^*$-action on an RH variety of classical type via projective geometry.
In Section \ref{sec:j-str} we study Cremona transformations induced by $\C^*$-actions on RH varieties, proving Theorem \ref{thm:Jordan}.
Finally, Section \ref{sec:proofThm1.2} is devoted to the proof of Theorem \ref{thm:unbalanced} and Theorem \ref{thm:E7}.

\subsection*{Acknowledgements} The first author would like to thank J. Wi\'sniewski and the participants of the working group meeting (Sopot, Poland, October 15--17, 2021) of the project ``Algebraic Torus Action: Geometry and Combinatorics'', financed by Polish National Science Center, grant 2016/23/G/ST1/04282, and Deutsche Forschungsgemeinschaft, grant HA 4383/8, for interesting discussions and suggestions. 

%%%%%%%%%%%%%%%%%%
%%%%%%%%%%%%%%%%%%
%%%%%%%%%%%%%%%%%%
%%%%%%%%%%%%%%%%%%

%%%%%%%%%%%%%%%%%%%%%%%%%%%%%%%%%%%%%%%%%%%%%%%%%%%%%%%%%%%%%%%%%%%%%%%%%%%%%%%

%%%%%%%%%%%%%%%%%%%%%%%%%%%%%%%%%%%%%%%%%%%%%%%%%%%%%%%%%%%%%%%%%%%%%%%%%%%%%%%
\section{Preliminaries}\label{sec:prelim}
%%%%%%%%%%%%%%%%%%%%%%%%%%%%%%%%%%%%%%%%%%%%%%%%%%%%%%%%%%%%%%%%%%%%%%%%%%%%%%%

%%%%%%%%%%%%%%%%%%%%%%%%%%%%%%%%%%%%%%%%%%%%%%%%%%%%%%%%%%%%%%%%%%%%%%%%%%%%%%%

Throughout this paper, unless otherwise stated, all the varieties will be projective, smooth and defined over the field of complex numbers. Given a vector bundle $\cE$ over such a variety, we denote by $\P(\cE)$ its homothetical projectivization, that is
\[
\P(\cE):=\Proj\left(\bigoplus_{m \ge 0} S^m \cE^\vee \right).
\]

%%%%%%%%%%%%%%%%%%%%%%%%%%%%%%%%%%%%%%%%%%%%%%%%%%%%%
\subsection{Notation and basic facts on $\C^*$-actions}\label{ssec:basic}
%%%%%%%%%%%%%%%%%%%%%%%%%%%%%%%%%%%%%%%%%%%%%%%%%%%%%

Let $X$ be a smooth projective variety endowed with a nontrivial $\C^*$-action. We will follow the notations and conventions in \cite{WORS1}.

\begin{itemize}[leftmargin=*]
\item We will denote by $X^{\C^*}$ the set of fixed-points of the action, and $\cY$ the set of irreducible components of $X^{\C^*}$.
\item Given a point $x \in X$, we denote by $x_\pm:=\lim_{t\to 0}t^{\pm 1} \cdot x\in X^{\C^*}$ the \emph{sink} $x_-$ and the \emph{source} $x_+$ of the orbit $\C^* \cdot x$.
\item The only components $Y_-,Y_+\in \cY$ such that, for a general $x\in X$, $\lim_{t\to 0}t^{\pm 1}\cdot x\in Y_\pm$ are called \emph{sink} and \emph{source} of the action, respectively.
\item Given $Y\in\cY$ -- which is a smooth subvariety of $X$ -- the normal bundle of $Y$ in $X$ will be denoted $\cN_{Y|X}$. 
	It decomposes as a direct sum of two subbundles
	\[
	\cN_{Y|X}=\cN^-(Y)\oplus\cN^+(Y),
	\]
	on which $\C^*$ acts with negative and positive weights, respectively; their ranks are denoted by $\nu^\pm(Y)$. 
	The two summands $\cN^\pm(Y)$ are $\C^*$-equivariantly isomorphic to subsets of $X$, the so-called \emph{Bia{\l}ynicki-Birula cells} $X^{\pm}(Y)\subset X$, defined as
	\[
	X^{\pm}(Y):=\left\{x\in X:\lim_{t\to 0}t^{\pm 1}\cdot x\in Y\right\}.
	\]
	Note that $X^\pm(Y_\pm)$  are open subsets in $X$.
\item The action is \emph{equalized} if the weights of the action of $\C^*$ on $\cN_{Y|X}$ are equal to $\pm 1$ for every $Y\in \cY$; equivalently, the action has no nontrivial isotropy subgroups.
\item We get a birational map  $\psi_a:\cN^-(Y_-)\dashrightarrow \cN^+(Y_+)$
	defined as the composition 
	\[
	\cN^-(Y_-)\simeq X^-(Y_-)\hookleftarrow X^-(Y_-)\cap X^+(Y_+)\hookrightarrow X^+(Y_+)\simeq \cN^+(Y_+).
	\]
	If the $\C^*$-action is equalized, the map descends via the quotient by homotheties to a birational map 
	\begin{equation}\label{eq:birmap}
	\psi:\P\left(\cN_{Y_-|X}\right)\dashrightarrow \P\left(\cN_{Y_+|X}\right).
	\end{equation} 
	Note that this map sends a general point of $\P\left(\cN_{Y_-|X}\right)$ corresponding to the tangent direction at $x_-\in Y_-$ of the closure of an orbit $\C^* \cdot x$ ($x\in X$ general) to its tangent direction at $x_+\in Y_+$. 
	The maps $\psi_a$ and (in the equalized case) $\psi$ are called the \emph{birational maps associated with the $\C^*$-action on $X$}. 
\item Given an ample line bundle $L$ on $X$, a linearization of the $\C^*$-action on $L$ exists. 
	Then $\C^*$ acts on the fibers of $L|_{Y}$ by multiplication with a character, which we denote by $\mu_L(Y)$. 
	Up to multiplication with a character, we may assume that $\mu_L(Y_-)=0$; then it follows that $\mu_L(Y)>0$ for every $Y\in\cY\setminus\{Y_-\}$ and that the maximum value $\delta$ of $\mu_L$ is achieved at the source $Y_+$. We set:
	\[
	Y_r:=\bigsqcup_{\mu_L(Y)=r} Y.
	\]
	Denoting $V:=H^0(X,L)^\vee$, the $\C^*$-action induces a weight decomposition 
	\[
		V=\bigoplus_{r\in \Z} V_r.
	\]
	If $L$ is very ample, we may write $Y_r=\P(V_r)\cap X$ for every $r$.

\item Note that $\mu_{rL}=r\mu_L$. 
	We will be interested in the case in which the Picard number of $X$ is one and we will assort the fixed-point components of the $\C^*$-action by their weights with respect to the ample generator of the Picard group of the variety.
\item Given the closure $C$ of an orbit $\C^* \cdot x$ satisfying that $x_-\in Y$, $x_+\in Y'$, it follows that 
	\[
	C\cdot L=(\mu_L(Y')-\mu_L(Y))t,
	\]
	where $t$ is the weight of $\C^*$ on $T_{C,x_+}$; if the action is equalized, then $t=1$ for every $x$. 
\end{itemize}

\begin{remark}\label{rem:WORS3}
In the case in which the $\C^*$-action is equalized and the Picard number of $X$ is equal to one, which will be our main interest, the map $\psi$ may be decomposed as a sequence of Atiyah flips and, in certain situations, a blowup and a blowdown (cf. \cite{WORS3}).
\end{remark}

%%%%%%%%%%%%%%%%%%%%%%%%%%%%%%%%%%%%%%%%%%%%%%%%%%%%%
\subsection{Preliminaries on adjoint groups of simple Lie algebras}\label{ssec:notgrps}
%%%%%%%%%%%%%%%%%%%%%%%%%%%%%%%%%%%%%%%%%%%%%%%%%%%%%

The main characters in this paper will be the equalized $\C^*$-actions on rational homogeneous varieties (\emph{RH varieties}, for short) of Picard number one. Here, we will introduce some notations we will use regarding these varieties and the corresponding $\C^*$-actions.

Throughout this paper, $G$ will denote the adjoint group of a simple Lie algebra $\fg$. 
We consider a Cartan subgroup $H\subset G$, a Borel subgroup $H\subset B\subset G$ and their corresponding Lie algebras $\fh\subset \fb\subset \fg$. 
We denote by $\Phi$ the root system of $G$ with respect to $H$, by $\Delta=\{\alpha_1,\dots,\alpha_n\}\subset \Phi^+\subset \Phi$ a set of positive simple roots of $\fg$ (determined by the choice of $B$), by $\omega_1,\dots,\omega_n$ the corresponding fundamental weights, by $W=\No_G(H)/H$ the Weyl group of $G$, and by  $s_1,\dots,s_n\in W$ the reflections associated to these positive simple roots. 
We will denote by $\cD$ the Dynkin diagram of $\fg$, whose nodes are numbered by $D=\{1,\dots,n\}$ (we follow the numbering convention of \cite[Planche I-IX]{Bourb}), in one-to-one correspondence with the set of positive simple roots $\Delta$. 
We will denote by $w_0\in \No_G(H)\subset G$ an element in the class of the longest element of the Weyl group $W= \No_G(H)/H$ (with respect to the choice of $\Delta\in \Phi$). 

Each $G$-homogeneous variety of Picard number one is completely determined by the choice of a node $k$ in $\cD$, since this determines uniquely a fundamental weight $\omega_k$ of the simple Lie algebra $\fg$ associated with $\cD$.
The closed $G$-orbit in the projectivization $\P\left(V(\omega_k)\right)$ of the irreducible representation $V(\omega_k)$ associated with $\omega_k$ is an RH $G$-variety of Picard number one, that we denote by $\cD(k)$. 
Alternatively, given the node $k$ of the Dynkin diagram $\cD$ and the subgroup $W_k\subset W$  generated by the reflections $s_i$, $i\neq k$, the subgroup $P:=BW_kB\subset G$ is a parabolic subgroup containing $B$, and 
\[
G/P\simeq \cD(k).
\]
We stress out that, the subgroup $P$ is completely determined by its Lie algebra $\fp\subset\fg$, whose Cartan decomposition is:
\begin{equation}\label{eq:Cartanparab}
\fp=\underbrace{\bigoplus_{\beta\in\Phi^+}\fg_\beta\oplus\fh}_{\fb}\oplus \bigoplus_{\substack{\beta\in\Phi^+:\\\sigma_k(\beta)=0}}\fg_{-\beta}.
\end{equation}
Here $\sigma_k:\Mo(H)\to \Z$ denotes the \emph{height map} determined by sending $\alpha_j$ to $1$ if $j=k$, and to $0$ otherwise, where $\Mo(H)$ is the group of characters of $H$, cf. \cite{Tev05}.

\begin{notation}\label{not:outNodes}
	Consider the Dynkin diagram $\cD$, whose nodes are numbered by elements of $D=\{1,\ldots,n\}$. We define the RH varieties $\cD(0)$ and $\cD(n+1)$ to be isolated points.
\end{notation}

%%%%%%%%%%%%%%%%%%%%%%%%%%%%%%%%%%%%%%%%%%%%%%%%%%%%%
\subsection{Preliminaries on RH varieties of classical type}\label{ssec:notclassic}
%%%%%%%%%%%%%%%%%%%%%%%%%%%%%%%%%%%%%%%%%%%%%%%%%%%%%

\subsubsection{Projective description}\label{sssec:projDescr}

For the reader's convenience, we include here the standard projective descriptions of the RH varieties of Picard number one of classical type. Moreover, we introduce some notations about them that will be useful later on.

\begin{itemize}[leftmargin=*]
\item For every $k\in \{1,\dots,n\}$ the variety $\DA_n(k)$ is the \emph{Grassmannian} of $k$-dimensional subspaces of the $(n+1)$-dimensional vector space $V$, which is the standard representation of the Lie algebra of type $\DA_n$. 
\item For $k\in \{1,\dots,n\}$ the variety $\DB_n(k)$ is the \emph{orthogonal Grassmannian} parametrizing $k$-dimensional vector subspaces of a $(2n+1)$-dimensional vector space $V$ that are isotropic with respect to a fixed bilinear symmetric form of maximal rank. In particular, $\DB_n(1)$ is a $(2n-1)$-dimensional smooth quadric.
\item For $k\in \{1,\dots,n\}$, $\DC_n(k)$ is the \emph{symplectic Grassmannian}, which parametrizes $k$-dimensional vector subspaces of a $2n$-dimensional vector space $V$ that are isotropic with respect to a fixed bilinear skew-symmetric form of maximal rank. In particular, $\DC_n(1)\simeq \P(V)$. 
\item For the description of the varieties $\DD_n(k)$, called \emph{orthogonal Grassmannians} as well, we consider a $2n$-dimensional vector space $V$ equipped with a bilinear symmetric form of maximal rank. Then, for $k\leq n-2$, $\DD_n(k)$ parametrizes $k$-dimensional vector subspaces of $V$ that are isotropic with respect to such a symmetric form. In particular $\DD_n(1)$ is a $(2n-2)$-dimensional smooth quadric. The varieties $\DD_n(n-1),\DD_n(n)$ parametrize the two families of $n$-dimensional isotropic subspaces of $V$. Any $(n-1)$-dimensional isotropic subspace of $V$ is the intersection of precisely two isotropic subspaces of dimension $n$, one in each family $\DD_n(n-1),\DD_n(n)$. The orthogonal Grassmannian of $(n-1)$-dimensional isotropic subspaces of $V$, denoted by $\DD_n(n-1,n)$ has Picard number two (and so it will not be considered in this paper), with two contractions:
	\[
	\xymatrix{ & \DD_n(n-1,n) \ar[ld]_{\P^{n-1}} \ar[rd]^{\P^{n-1}} & \\ \DD_n(n-1) & & \DD_n(n).}
	\]
\end{itemize}

\subsubsection{Universal bundles}\label{sssec:univBundles}
Grassmannians (standard, orthogonal and symplectic) come equipped with some universal bundles that we will now describe. 

\begin{itemize}[leftmargin=*]
\item The Grassmannian $\DA_n(k)$ supports two universal vector bundles $\cS,\cQ$ of rank $k$ and $n+1-k$,  respectively. The former is the subbundle of the trivial bundle, whose fiber over an element $[W]\in \DA_n(k)$ is the corresponding subspace $W\subset V$. Then $\cQ$ is defined as the cokernel of the inclusion $\cS\hookrightarrow V\otimes \cO_{\DA_n(k)}$, so that we have the short exact sequence:
\[
\shse{\cS}{V\otimes\cO_{\DA_n(k)}}{\cQ}.
\]
It is then well known that we have a decomposition of the tangent bundle of $\DA_n(k)$ as
\[
T_{\DA_n(k)}\simeq \cS^\vee \otimes \cQ.
\]
\item In the case of \emph{isotropic Grassmannians} (i.e. orthogonal or symplectic), we may consider the pullback of the two universal bundles on standards Grassmannians via the natural inclusions $\DB_n(k)\subset \DA_{2n+1}(k)$, $\DC_n(k),\DD_n(k)\subset \DA_{2n}(k)$ (for $k\le n-2$ in the $\DD_n$-case), $\DD_{n}(n-1),\DD_n(n)\subset  \DA_{2n}(n)$. In order to consider all these cases together, we start with a finite-dimensional vector space $V$ endowed with a nondegenerate symmetric or skew-symmetric isomorphism $q:V\to V^\vee$. For every $k<\dim V/2$, consider the variety $X\subset \DA_{\dim V-1}(k)$ parametrizing isotropic $k$-dimensional subspaces of $V$ and the restrictions of the universal bundles -- that we still denote by $\cS$ and $\cQ$ -- on these isotropic Grassmannian.  
The fact that the subspaces parametrized by $X$ are $q$-isotropic  implies that the composition
\[
\cS \lra V \otimes \cO_X\stackrel{q}{\lra} V^\vee \otimes \cO_X \lra  \cS^\vee
\]
is zero, and so we have an injection $\cS\to \cQ^\vee$, whose cokernel we denote by $\cK$.
The isomorphism $q$ induces a (symmetric or skew-symmetric, respectively) isomorphism $\cK\simeq \cK^\vee$ and so, summing up, we have a commutative diagram with short exact rows and columns:
\begin{equation}
\xymatrix@R=10mm{\cS \ar@{=}[d] \ar[r] & \cQ^\vee \ar[d] \ar[r] & \cK \simeq \cK^\vee \ar[d] \\ 
\cS \ar[r] \ar[d] & V \otimes \cO \stackrel{q}{\simeq}  V^\vee \otimes \cO \ar[d] \ar[r] &\cQ \ar[d]\\
0 \ar[r] & \cS^\vee \ar@{=}[r] & \cS^\vee}
\label{eq:univBD}
\end{equation}  
Furthermore, it is known for example by \cite[Proposition 5.1 and  5.4]{LM} that the tangent bundle of $X$ fits into a  short exact sequence:
\begin{equation}\label{eq:tangentBD}
\shse{\cS^\vee \otimes \cK}{T_{X}}{\cC}
\end{equation}
where
\[
\cC=\begin{cases}\bigwedge^2\cS^\vee &\text{if }\cD=\DB_{(\dim V-1)/2},\DD_{\dim V/2},\\
S^2\cS^\vee &\text{if }\cD=\DC_{\dim V/2}.\end{cases}
\]
Note that in the case where $X$ parametrize maximal isotropic subspaces, i.e. $X=\DC_n(n),\DD_n(n-1),\DD_n(n)$, the bundle $\cK$ is equal to zero, so with the above notation we will have $T_X\simeq \cC$ (cf. \cite[Section 3.1]{LM}). 
\end{itemize} 

%%%%%%%%%%%%%%%%%%%%%%%%%%%%%%%%%%%%%%%%%%%%%%%%%%%%%
\subsection{Equalized $\C^*$-actions on RH varieties}\label{ssec:equalized}
%%%%%%%%%%%%%%%%%%%%%%%%%%%%%%%%%%%%%%%%%%%%%%%%%%%%%

Let us now describe briefly equalized $\C^*$-actions on rational homogeneous varieties of Picard number one as above. We will use the notation introduced in Section \ref{ssec:basic}.

\subsubsection{Short gradings}\label{sssec:shortgr}
Following \cite{Fra1,WORS5}, equalized $\C^*$-actions on RH varieties are given by the choice of a {\em short grading} on $\fg$:
\[
\fg=\fg_{-}\oplus \fg_0\oplus \fg_+.
\]
Up to the adjoint action of an element of $W$, we may assume that $\fb\subset \fg_0\oplus \fg_+$, and  the grading will then be completely determined by the choice of an index $i\in D=\{1,\dots,n\}$ such that the corresponding height map $\sigma_i:\Mo(H)\to \Z$ satisfies that $\sigma_i(\Phi)=\{-1,0,1\}$. 
We will denote
\[
\fp_{\pm}:= \fg_0\oplus \fg_{\pm},
\]
which are parabolic subalgebras of $\fg$, corresponding to opposite parabolic subgroups $P_\pm\subset G$, cf. \cite[Section~4.8]{BT}.  

\begin{remark}\label{rem:definitionHi}
We have $B\subset P_+=BW_iB$, so that, with the marked Dynkin diagram notation, $G/P_+\simeq \cD(i)$ and $P_\pm$ have a common Levi part $G_0$ with Lie algebra $\fg_0$.

The reductive subalgebra $\fg_0\subset \fg$ decomposes as $\fg^\perp\oplus \fh_i$, where $\fh_i$ is a $1$-dimensional vector subspace of the Cartan subalgebra $\fh$, and $\fg^\perp$ is semisimple. 
The subalgebras $\fg^\perp,\fg_0,\subset \fg$ and $\fh_i \subset \fh$ correspond to  subgroups $G^\perp\subset G_0\subset G$ and $H_i\subset H$.
The Dynkin diagram $\cD^\perp$ associated with $G^\perp$ can be obtained by deleting the node $i$ from the Dynkin diagram $\cD$. 

Moreover, $H_i$ can be described as the image of the induced injective map $\sigma_i^*:\C^*\to H$. In the sequel we will directly identify $H_i$ with $\C^*$ via $\sigma_i^*$. 
\end{remark}
Then the following statement follows from \cite{Fra1}:

\begin{proposition}\label{prop:Hi}
With the above notation, the $H_i$-action on each rational homogeneous $G$-variety $X=G/P$ is equalized, if and only if the short grading induced by $\sigma_i$ on $\fg$ is short. 
\end{proposition}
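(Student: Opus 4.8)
The plan is to reduce the statement to a single weight computation, using that $H_i$ acts on each root space $\fg_\gamma$ with weight $\sigma_i(\gamma)$. Indeed $H_i$ is the image of the cocharacter $\sigma_i^*\colon\C^*\to H$ dual to the height map, so the weight of $H_i=\sigma_i^*(\C^*)$ on $\fg_\gamma$ is $\langle\sigma_i^*,\gamma\rangle=\sigma_i(\gamma)\in\sigma_i(\Phi)$ (this pairing is well defined because $G$ is adjoint, so $\Mo(H)$ is the root lattice). I read the statement as the equivalence between ``$H_i$ is equalized on $G/P$ for every parabolic $P$'' and ``$\sigma_i(\Phi)\subseteq\{-1,0,1\}$'' (the shortness condition of Section~\ref{sssec:shortgr}), and I will single out $X=\cD(i)=G/P_+$ as the decisive variety.

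First I would record the weights along the fixed locus. For any $X=G/P$, an $H$-fixed point $wP$ ($w\in W$) has $T_{wP}X\simeq\fg/\Ad(w)\fp$, a sum of root spaces $\fg_\gamma$ on which $H_i$ acts with weight $\sigma_i(\gamma)$. Since $H_i$ acts trivially on each component $Y\in\cY$ and $Y$ is connected, the multiset of $H_i$-weights on the fibres of $\cN_{Y|X}$ is constant along $Y$; as $Y$ is projective and $H$-stable it contains an $H$-fixed point by Borel's fixed point theorem, so these weights all lie in $\sigma_i(\Phi)\setminus\{0\}$. Consequently, if $\sigma_i(\Phi)\subseteq\{-1,0,1\}$ then every nonzero normal weight equals $\pm1$, which is exactly the equalization condition recalled in Section~\ref{ssec:basic}; this proves that a short grading forces $H_i$ to be equalized on every $G/P$.

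For the converse I would argue by contraposition on the single variety $X=\cD(i)=G/P_+$. At $eP_+$ one has $T_{eP_+}X\simeq\fg/\fp_+\simeq\fg_-$, whose $H_i$-weights are all strictly negative; in particular the weight-$0$ part is zero, so $eP_+$ is an \emph{isolated} fixed point and $\cN_{eP_+|X}=\fg_-$ carries the weight set $\{-\sigma_i(\beta):\beta\in\Phi^+,\ \sigma_i(\beta)>0\}$. Because positive roots have $\sigma_i\ge 0$ and $\Phi=-\Phi$, failure of shortness yields a positive root $\beta_0$ with $\sigma_i(\beta_0)\ge 2$, producing a normal weight $\le -2$ at $eP_+$; hence $H_i$ is not equalized on $\cD(i)$. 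Combined with the first part this gives the biconditional.

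The computations themselves are routine; the point that needs care — and which I view as the crux — is that the fixed locus of the subtorus $H_i$ is strictly larger than that of $H$, its components being positive-dimensional orbits of the Levi $G_0=C_G(H_i)$. One therefore cannot read the normal weights off a single coset $\fg/\Ad(w)\fp$ without justification; the reduction to $H$-fixed points via Borel's theorem, together with the constancy of $H_i$-weights along a connected fixed component, is what makes the argument valid and pins everything to the single invariant $\sigma_i(\Phi)$.
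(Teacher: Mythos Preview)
Your argument is correct. The paper itself does not supply a proof of this Proposition, deferring instead to the reference \cite{Fra1}; your direct weight computation via root spaces, together with the reduction to $H$-fixed points on each $H_i$-fixed component (using Borel's fixed-point theorem and constancy of the normal weights along a connected component), is the natural approach and is presumably what one finds there. The choice of $X=\cD(i)=G/P_+$ for the converse is exactly right: it is the unique variety on which $eP$ is an isolated $H_i$-fixed point, so the full weight set $\{-\sigma_i(\beta):\beta\in\Phi^+,\ \sigma_i(\beta)>0\}$ is visible in a single normal space.

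One cosmetic remark: in the converse step you write $T_{eP_+}X\simeq\fg_-$, but in the paper's conventions $\fg_-$ denotes the degree~$(-1)$ piece of a \emph{short} grading, whereas in the non-short situation you mean the full negative part $\bigoplus_{m<0}\fg_m$ of the $\Z$-grading induced by $\sigma_i$. This is harmless for the argument, but worth saying explicitly to avoid confusion with the standing notation.
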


\begin{remark}\label{rem:projectionM(H)}
The inclusion $\imath:G^\perp\hookrightarrow G$, together with the choice of the Cartan subgroup $H^\perp:=G^\perp \cap H$, define a group homomorphism $\imath^*:\Mo(H)\to \Mo(H^\perp)$ sending $\{\alpha_j : j\neq i\}$ to the set of positive simple roots of $G^\perp$ corresponding to the choice of the Borel subgroup $B^\perp:=G^\perp \cap B$ and sending the fundamental weight $\omega_i$ to zero. 
We denote by $\alpha_j\in \Mo(H^\perp)$ the image of $\alpha_j\in \Delta$ via $\imath^*$ for every $j\neq i$. 
\end{remark}

The following table describes the list of possible height maps that induce a short grading on $\fg$:

\begin{table}[ht!]
\begin{tabular}{|c||c|c|c|c|c|c|}
\hline
	$\fg$ & $\DA_n$ & $\DB_n$ & $\DC_n$ & $\DD_n$ & $\DE_6$ & $\DE_7$ \\
	\hline
	$\sigma_i$ & $\sigma_i$ for $i=1,\ldots,n$ & $\sigma_1$ & $\sigma_n$ & $\sigma_1,\sigma_{n-1},\sigma_n$ & $\sigma_1,\sigma_6$ & $\sigma_7$\\\hline
\end{tabular}
\caption{Short gradings of simple Lie algebras. \label{tab:short}}
\end{table}

For the RH varieties of classical type, equalized actions have explicit descriptions, that we will present in detail in Section \ref{sec:equalGrass}.

\subsubsection{Fixed-point components}\label{sssec:fpc}
We will now describe the fixed-point component of an equalized $H_i$-action as in Section \ref{sssec:shortgr} on an RH variety $G/P \simeq \cD(k)$ as in Section \ref{ssec:notgrps}, with $i,k\in D$. 
For every $w\in W$, we will denote 
\[
P^\perp_w:=G^\perp\cap \conj_w(P)
\]
which is a parabolic subgroup of $G^\perp$. 
In particular, $P^\perp:=P^\perp_e$, where $e \in W$ is the unity. 
Such a parabolic subgroup is completely determined by its Lie algebra:
\begin{equation}\label{eq:parabW}
\fp^\perp_w=\left(\bigoplus_{\substack{\beta\in\Phi^+:\\\sigma_i(w(\beta))=0}}\fg_{w(\beta)}\right)\oplus \left(\Ad_w(\fh)\cap\fh^\perp\right)\oplus \left(\bigoplus_{\substack{\beta\in\Phi^+:\\\sigma_k(\beta)=0\\\sigma_i(w(\beta))=0}}\fg_{-w(\beta)}\right) \subset \fg^\perp.
\end{equation}

Following \cite[Corollary~3.10]{WORS5}, the fixed-point components of the $H_i$-action are $G^\perp$-homogeneous varieties. 
Since each fixed-point component $Y \subset G/P$ must contain a fixed-point for the $H$-action (which are the points of the form $wP$ for $w\in W$, see \cite[Section~3.4]{CARRELL}), $Y$ can then be written as:
\[
Y={G^\perp}/{P^\perp_w}.
\]
As a rational homogeneous $G^\perp$-variety, $Y \simeq \cD^\perp(J)$, for some $J\subset D\setminus \{i\}$.

The Picard group of $G/P \simeq \cD(k)$ is generated by the homogeneous line bundle $L$ determined by the fundamental weight $\omega_k$. 
Then, following \cite[Corollary 3.8]{WORS5}, the $\C^*$-action admits a linearization on $L$ such that the $L$-weight of the fixed-point component passing by $wP$ is 
\[
\mu_L(G^\perp/P^\perp_w)=\sigma_i(\omega_k-w(\omega_k)).
\]
The minimum and maximum value of $\mu_L$ are achieved at $w=e$ and $w=w_0$.
So we may conclude that the sink and the source of the $H_i$-action are, respectively:
\begin{equation}\label{eq:sinkSource}
	Y_-:={G^\perp}/{P^\perp},\qquad Y_+:= {G^\perp}/{P^\perp_{w_0}}.
\end{equation}

\begin{remark}\label{rem:Y-Y+marked}
In particular, with the marked Dynkin diagram notation, if $i \ne k$,
\[
Y_-\simeq\cD^\perp(k),\qquad Y_+ \simeq \cD^\perp(w_0(k)),
\]
where $w_0(k)$ denotes the node of the Dynkin diagram $\cD^\perp \subset \cD$ corresponding to the positive simple root $-w_0(\alpha_k) \in \Delta$.
\end{remark}

\subsubsection{The normal bundle of the extremal fixed-point components}\label{sssec:normalbundle}
We end up this section by describing the normal bundles of the sink and source $Y_\pm$ of the $H_i$-action into the rational homogeneous variety $X=G/P\simeq\cD(k)$, which, by the Bia{\l}ynicki-Birula theorem, are isomorphic to two $\C^*$-invariant open sets of $X$. 

\begin{lemma}\label{lem:normals}
With the above notation, the normal bundles of $Y_\pm\subset X$ of the $H_i$-action on $X$ are the homogeneous $G^\perp$-bundles
\[
\cN_{Y_-|X}\simeq G^\perp\times^{P^{\perp}} N_-, \qquad \cN_{Y_+|X}\simeq G^\perp\times^{P^{\perp}_{w_0}} N_+.
\]
where $N_-,N_+$ are, respectively, the $P^\perp, P^\perp_{w_0}$-submodules of $\fg$ defined by:
\begin{equation}\label{eq:Ns}
N_-:=\bigoplus_{\substack{\beta\in\Phi^+\\ \sigma_i(\beta)>0\\\sigma_k(\beta)>0}} \fg_{-\beta}\subset\fg_-, \qquad
N_+:=\bigoplus_{\substack{\beta\in\Phi^+\\ \sigma_i(\beta)>0\\\sigma_k(w_0(\beta))<0}} \fg_{\beta}\subset\fg_+.
\end{equation}
Moreover the $H^\perp$-weight of every subspace $\fg_{\beta}$ is equal to $\imath^*\beta$.
\end{lemma}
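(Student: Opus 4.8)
The plan is to reduce everything to the identification of a single fibre and then read off root spaces. First I would observe that, by Section \ref{sssec:fpc}, $Y_-$ and $Y_+$ are $G^\perp$-orbits in $X$, so the normal bundles $\cN_{Y_\pm|X}$ are $G^\perp$-equivariant; since a $G^\perp$-equivariant bundle on a homogeneous space $G^\perp/Q$ is determined by its fibre at the base point (a $Q$-module), it suffices to identify, for $w\in\{e,w_0\}$, the fibre of $\cN_{Y|X}$ over the base point of $Y=G^\perp/P^\perp_w$, embedded in $X=G/P$ by $gP^\perp_w\mapsto gwP$. The differential of the orbit map $g\mapsto gwP$ identifies $T_{X,wP}$ with $\fg/\Ad_w(\fp)$, sends $\fg^\perp$ onto $T_{Y,wP}$, and has kernel $\fp^\perp_w=\fg^\perp\cap\Ad_w(\fp)$; hence the fibre I need is the $P^\perp_w$-module
\[
\cN_{Y|X,\,wP}\simeq\fg\big/\big(\fg^\perp+\Ad_w(\fp)\big).
\]

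Next I would make this quotient explicit on root spaces, using \eqref{eq:Cartanparab}, \eqref{eq:parabW} and $\fg^\perp=\fh^\perp\oplus\bigoplus_{\beta\in\Phi:\,\sigma_i(\beta)=0}\fg_\beta$. For the sink ($w=e$) a root space $\fg_{-\beta}$ with $\beta\in\Phi^+$ survives in $\fg/(\fg^\perp+\fp)$ precisely when $\fg_{-\beta}\not\subset\fp$ and $\fg_{-\beta}\not\subset\fg^\perp$, that is $\sigma_k(\beta)\neq0$ and $\sigma_i(\beta)\neq0$; since $\beta\in\Phi^+$ forces $\sigma_i(\beta),\sigma_k(\beta)\ge0$, this is exactly $\sigma_i(\beta)>0$, $\sigma_k(\beta)>0$, the index set of $N_-$, and shortness gives $\sigma_i(\beta)=1$, so $\fg_{-\beta}\subset\fg_-$. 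For the source ($w=w_0$) I would run the same count on $\Ad_{w_0}(\fp)$: using $w_0(\Phi^+)=\Phi^-$ and $w_0^2=\mathrm{id}$, the subalgebra $\Ad_{w_0}(\fp)$ contains every $\fg_\gamma$ with $\gamma\in\Phi^-$ together with those $\fg_\gamma$, $\gamma\in\Phi^+$, for which $\sigma_k(w_0(\gamma))=0$, while adding $\fg^\perp$ removes the root spaces with $\sigma_i(\gamma)=0$; thus $\fg_\gamma$ survives iff $\gamma\in\Phi^+$, $\sigma_i(\gamma)\neq0$ and $\sigma_k(w_0(\gamma))\neq0$. As $\gamma\in\Phi^+$ gives $\sigma_i(\gamma)\ge0$ and $w_0(\gamma)\in\Phi^-$ gives $\sigma_k(w_0(\gamma))\le0$, these conditions sharpen to $\sigma_i(\gamma)>0$ and $\sigma_k(w_0(\gamma))<0$, which is the index set of $N_+$ in \eqref{eq:Ns}.

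The $H^\perp$-weight statement is then immediate: $H^\perp=H\cap G^\perp$ acts on $\fg_\beta$ through the restriction to $H^\perp$ of the $H$-weight $\beta$, which by Remark \ref{rem:projectionM(H)} is $\imath^*\beta$. I expect the main obstacle to be the source computation, where one must carry the element $w_0$ through the calculation and exploit both $w_0^2=\mathrm{id}$ and the sign behaviour of $\sigma_i,\sigma_k$ on $\Phi^\pm$ in order to turn the open conditions ``$\neq0$'' into the sharp inequalities defining $N_+$. A second point worth stating carefully is that, as a subspace of $\fg$, $N_-$ (resp. $N_+$) need not be stable under $\ad(\fp^\perp)$ (resp. $\ad(\fp^\perp_{w_0})$): it is rather a complete system of root-vector representatives for the quotient module $\fg/(\fg^\perp+\Ad_w(\fp))$, and it is via this identification that $N_\pm$ inherits the $P^\perp_w$-module structure yielding $\cN_{Y_\pm|X}\simeq G^\perp\times^{P^\perp_w}N_\pm$.
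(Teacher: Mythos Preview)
Your argument is correct and follows essentially the same route as the paper: identify the fibre of the normal bundle at the base point as the quotient $\fg/(\fg^\perp+\Ad_w(\fp))$ and read off the surviving root spaces from the Cartan decompositions. The only cosmetic difference is that the paper first writes $\cN_{Y_-|X}=G_0\times^{P_0}N_-$ with $P_0=G_0\cap P$ and computes $N_-\simeq(\fg/\fp)/(\fg_0/\fg_0\cap\fp)$ before restricting from $G_0$ to $G^\perp$, whereas you work with $G^\perp$ from the outset; since $\fh_i\subset\fh\subset\fp$, one has $\fg_0+\fp=\fg^\perp+\fp$ and the two quotients coincide. Your explicit treatment of the $w=w_0$ case and your remark that $N_\pm$ acquires its $P^\perp_w$-module structure only via the quotient identification (not as a subspace of $\fg$) are both accurate and, if anything, more careful than the paper's ``$Y_+$ analogous''.
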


\begin{proof}
We will do the proof in the case of $Y_-$, being $Y_+$ analogous. Note that the action of $G_0$ on $Y_-$ lifts to $\cN_{Y_-|X}$, so we may write
\[
\cN_{Y_-|X}=G_0\times^{P_0}N_-.
\]
where $N_-=\cN_{Y_-|X,eP}$ and $P_0:=G_0\cap P$. 
In order to compute $N_-$ as a $P_0$-module, we start from the Cartan decomposition of $\fg$ and note that:
\begin{align*}
\fg &=\left(\bigoplus_{\beta\in\Phi^+}\fg_\beta\right)\oplus\fh\oplus\left(\bigoplus_{\beta\in \Phi^+}\fg_{-\beta}\right),\\[3pt]
\fp &=\left(\bigoplus_{\beta\in\Phi^+}\fg_\beta\right)\oplus\fh\oplus\left( \bigoplus_{\substack{\beta\in\Phi^+:\\\sigma_k(\beta)=0}}\fg_{-\beta}\right),\\
\fg_0&=\left(\bigoplus_{\substack{\beta\in\Phi^+:\\\sigma_i(\beta)=0}}\fg_\beta\right)\oplus\fh\oplus\left( \bigoplus_{\substack{\beta\in\Phi^+:\\\sigma_i(\beta)=0}}\fg_{-\beta}\right),\\[3pt]
\fg_0 \cap \fp&=\left(\bigoplus_{\substack{\beta\in\Phi^+:\\\sigma_i(\beta)=0}}\fg_\beta\right)\oplus\fh\oplus\left( \bigoplus_{\substack{\beta\in\Phi^+:\\\sigma_k(\beta)=0\\\sigma_i(\beta)=0}}\fg_{-\beta}\right).
\end{align*}
Then we can conclude that $N_-$ is isomorphic as a $P_0$-module to the quotient $(\fg/\fp)/(\fg_0/\fg_0\cap \fp)$, hence to
\[
\bigoplus_{\substack{\beta\in\Phi^+:\\\sigma_i(\beta)>0\\\sigma_k(\beta)>0}} \fg_{-\beta}.
\]
To reduce the action on $N_-$ from $P_0$ to $P^\perp$ and compute the $H^\perp$-weights, we apply the group homomorphism $\imath^*:\Mo(H)\to \Mo(H^\perp)$, see Remark \ref{rem:projectionM(H)}. 
\end{proof}

%%%%%%%%%%%%%%%%%%
%%%%%%%%%%%%%%%%%%
%%%%%%%%%%%%%%%%%%
%%%%%%%%%%%%%%%%%%

%%%%%%%%%%%%%%%%%%%%%%%%%%%%%%%%%%%%%%%%%%%%%%%%%%%%%%%%%%%%%%%%%%%%%%%%%%%%%%%

%%%%%%%%%%%%%%%%%%%%%%%%%%%%%%%%%%%%%%%%%%%%%%%%%%%%%%%%%%%%%%%%%%%%%%%%%%%%%%%
\section{Equalized actions on %Grassmannians and isotropic 
Grassmannians}\label{sec:equalGrass} 
%%%%%%%%%%%%%%%%%%%%%%%%%%%%%%%%%%%%%%%%%%%%%%%%%%%%%%%%%%%%%%%%%%%%%%%%%%%%%%%

%%%%%%%%%%%%%%%%%%%%%%%%%%%%%%%%%%%%%%%%%%%%%%%%%%%%%%%%%%%%%%%%%%%%%%%%%%%%%%%

For every case of Table \ref{tab:short} of classical type, we describe in this section the fixed-point component of the $\C^*$-action $H_i$ induced by the height map $\sigma_i$ on each RH $G$-variety $X$ of Picard number one.
We denote by $L$ the generator of the Picard group.
We proceed as in \cite[Section 4]{Fra1} (see also \cite[Example 7.12]{WORS1}): we first describe the $H_i$-action on the standard representation $V$ and on its closed $G$-orbit in $\P(V)$, then we study the induced $H_i$-action on the other $G$-homogeneous varieties, see Section \ref{sssec:projDescr} for the notation.

%%%%%%%%%%%%%%%%%%%%%%%%%%%%%%%%%%%%%%%%%%%%%%%%%%%%%
\subsection{$\DA_n$-varieties}\label{ssec:An}
%%%%%%%%%%%%%%%%%%%%%%%%%%%%%%%%%%%%%%%%%%%%%%%%%%%%%

We consider the $H_i$-action on $\DA_n(k)$, the Grassmannian of $k$-dimensional subspaces of the $(n+1)$-dimensional vector space $V$, corresponding to the standard representation $V(\omega_1)$. For simplicity we will assume that $k\leq n-k+1$. 
The $H_i$-action on $\DA_n(1)\simeq \P(V)$ is induced by the linear action on $V$ given by
	\[
	t\cdot (x_0,\ldots,x_n) = (tx_0,\ldots,tx_{i-1},x_i, \ldots,x_n),
	\]
on a certain set of coordinates. 
In particular, we get a decomposition $V=V_-\oplus V_+$, where 
\[
V_-=\left\{x_i=\ldots=x_n=0\right\}, \qquad V_+=\left\{x_0=\ldots=x_{i-1}=0\right\}.
\]
Then the $H_i$-action on $\DA_n(k)$ is obtained as the induced action on the Grassmannians of $k$-subspaces of $V$.
 In every case, the only fixed-point component of weight $s$, for every $s$, is:
\[
\DA_{i-1}(k-s)\times \DA_{n-i}(s),
\] 
respectively, which is the product of two Grassmannians of vector subspaces of $V_-$ and $V_+$. 
By Notation \ref{not:outNodes}, we recall that the factor $\DA_{i-1}(k-s)$ (resp. $\DA_{n-i}(s)$) is a point if $s=k,k-i$ (resp. $s=0,n-i+1$) . Hence 
we have three possible cases according to the values of $i$, $k$ and $n$: 

\addtolength{\tabcolsep}{-4.5pt}
\begin{table}[h!]
\begin{tabular}{|c|c|c|c|c|}
\hline
&$s$
&$Y_-$&$Y_s$&$Y_+$\\\hline\hline
$i\leq k$
&$[k-i,k]$&$\DA_{n-i}(k-i)$&$\DA_{n-i}(s)$&$\DA_{n-i}(k)$\\\hline
$i\!\in\![k,n-k+1]$
&$[0,k]$&$\DA_{i-1}(k)$&$\DA_{i-1}(k-s)\!\times\! \DA_{n-i}(s)$
&$\DA_{n-i}(k)$\\\hline
$i\geq n-k+1$
&$[0,n-i+1]$&$\DA_{i-1}(k)$&$\DA_{i-1}(k-s)$&$\DA_{i-1}\!(i-n+k-1)$\\\hline
\end{tabular}
\end{table}
\addtolength{\tabcolsep}{+2pt}
Note that $Y_-$ is given by the point $[V_-]$ if and only if $i=k$. On the other hand $Y_+$ is the point $[V_+]$ if and only if $i=n-k+1$. In the rest of the cases, the extremal fixed-point components of the action are positive dimensional Grassmannians.

\begin{remark}
 Composing the $H_i$-action with the map $t \mapsto t^{-1}$ and with the permutation $(x_0,\dots,x_n)\mapsto (x_n,\dots,x_0)$, we obtain the $H_{n-i}$-action. In other words, we may assume without loss of generality that $i\leq n-i+1$, and in particular we may discard the third case in the above table.
 \end{remark} 

%%%%%%%%%%%%%%%%%%%%%%%%%%%%%%%%%%%%%%%%%%%%%%%%%%%%%
\subsection{$\DB_n$-varieties}\label{ssec:Bn}
%%%%%%%%%%%%%%%%%%%%%%%%%%%%%%%%%%%%%%%%%%%%%%%%%%%%%

We consider the $H_1$-action on $\DB_n(k)$, the orthogonal Grassmannian of $k$-dimensional subspaces of the $(2n+1)$-dimensional standard representation $V$, isotropic with respect to a given nondegenerate symmetric form. We may choose coordinates $(x_0,\dots, x_{2n})$ (with respect to a certain basis $\{e_0,\dots,e_{2n}\}$) on $V$ so that this form is represented by a symmetric (block) matrix:
\[
\begin{pmatrix}[c|c|c]
0&I_n&0\\\hline
I_n&0&0\\\hline
0&0&1
\end{pmatrix}
\]
where $I_n$ denote the identity $n\times n$ matrix.
 In particular, $\DB_n(1)\subset \P(V)$ is the quadric hypersurface $Q^{2n-1}$ given by the equation 
 \[
 x_0x_{n}+\ldots+x_{n-1}x_{2n-1}+x_{2n}^2=0.
 \]
 In this case, the $H_1$-action on every $\DB_n(k)$ is induced by the linear action on $V$:
\begin{equation}\label{equation:H1-actionQuadric}
	t \cdot (x_0,\ldots,x_{2n})=(tx_0,x_1,\ldots,x_{n-1},t^{-1}x_n,x_{n+1},\ldots,x_{2n-1},x_{2n}),
\end{equation}
giving a decomposition $V=V_-\oplus V_0\oplus V_+$, where we have
\begin{align*}
	&V_-=\left\{x_1=\ldots=x_{2n}=0\right\},\\
	&V_0=\left\{x_0=x_n=0\right\},\\
	&V_+=\left\{x_0=\ldots=x_{n-1}=x_{n+1}=\ldots=x_{2n}=0\right\}.
\end{align*}
The vector subspace $V_0\subset V$ inherits a nondegenerate symmetric bilinear form, whose associated Lie algebra (which is of type $\DB_{n-1}$) is precisely $\fg^\perp$. 
We have two cases:

\begin{table}[h!]
\begin{tabular}{|c|c|c|c|c|}
\hline
&$[\mu_L(Y_-),\mu_L(Y_+)]$&$Y_-$&$Y_{(\mu_L(Y_+)+\mu_L(Y_-))/2}$&$Y_+$\\\hline\hline
$k<n$&$[0,2]$&$\DB_{n-1}(k-1)$&$\DB_{n-1}(k)$&$\DB_{n-1}(k-1)$\\\hline 
$k=n$&$[0,1]$&$\DB_{n-1}(n-1)$&$\emptyset$&$\DB_{n-1}(n-1)$\\\hline
\end{tabular}
\end{table}

The sink and the source of the action parametrize isotropic subspaces of $V$ containing, respectively, $V_-$ and $V_+$; every such subspace intersects $V_0$ in a $(k-1)$-dimensional isotropic subspace of $V_0$. When $k<n$ we also have an inner fixed-point component, parametrizing $k$-dimensional isotropic subspaces of $V_0$.

We note here that the value of $\mu_L(Y_+)$ changes because the ample generator $L$ of $\Pic(\DB_n(k))$ is the Pl\"ucker line bundle for $k<n$, and its square root for $k=n$. 

%%%%%%%%%%%%%%%%%%%%%%%%%%%%%%%%%%%%%%%%%%%%%%%%%%%%%
\subsection{$\DC_n$-varieties}\label{ssec:Cn}
%%%%%%%%%%%%%%%%%%%%%%%%%%%%%%%%%%%%%%%%%%%%%%%%%%%%%

We consider the $H_n$-action on $\DC_n(k)$, the symplectic Grassmannian of $k$-dimensional subspaces of the $2n$-dimensional standard representation $V$, isotropic with respect to a given nondegenerate skew-symmetric form; we may assume this form to be given, in a certain set of coordinates, by the matrix 
\[
\begin{pmatrix}[c|c]
0&I_n\\\hline
-I_n&0
\end{pmatrix}.
\]
The $H_n$-action on every $\DC_n(k)$ is induced from the linear action on $V$ given by: 
\begin{equation}\label{equation:Hn-actionProjectiveSpace}
	t \cdot (x_0,\ldots,x_{2n-1})=(tx_0,\ldots,tx_{n-1},x_n,\ldots,x_{2n-1}).
\end{equation}
In other words, it is the restriction to $\DC_n(k)$ of the $H_n$-action on the Grassmannian $\DA_{2n-1}(k)$. The fixed-point components of $\DC_n(k)$ can be computed by intersecting the fixed-point components of $\DA_{2n-1}(k)$ with this variety. Note also that the skew-symmetric form induces an isomorphism $q: V_- \to V_+^\vee$, that extends to isomorphisms between Grassmannians of subspaces of $V_-$ and subspaces of $V_+$ of complementary dimension. 
With this information at hand one may compute fixed-point components of the $H_n$-action on $\DC_n(k)$:
\begin{table}[h!]
\begin{tabular}{|c|c|c|c|c|}
\hline
&$[\mu_L(Y_-),\mu_L(Y_+)]$&$Y_-$&$Y_s$&$Y_+$\\\hline\hline
$k< n$&$[0,k]$&$\DA_{n-1}(k)$&$\DA_{n-1}(k-s,n-s)$&$\DA_{n-1}(n-k)$\\\hline
$k=n$&$[0,n]$&$\point$&$\DA_{n-1}(n-s)$&$\point$\\\hline
\end{tabular}
\end{table}

We note that, in the case $k=n$, every inner fixed-point component $Y_s\simeq\DA_{n-1}(n-s)$ is the image of the map
\[
	\DA_{n-1}(n-s)\lra\DA_{n-1}^{}(n-s)\times\DA_{n-1}^{}(s)\subset\DA_{2n-1}(n),\qquad	W\longmapsto \left(W,\, \ker q(W)\right),
\]
which in particular tells us that the restriction to $Y_s\simeq\DA_{n-1}(n-s)$ of the Pl\"ucker embedding of $\DC_n(n)$ is not the minimal embedding but its second symmetric power.
%\end{case}

%%%%%%%%%%%%%%%%%%%%%%%%%%%%%%%%%%%%%%%%%%%%%%%%%%%%%
\subsection{$\DD_n$-varieties}\label{ssec:Dn}
%%%%%%%%%%%%%%%%%%%%%%%%%%%%%%%%%%%%%%%%%%%%%%%%%%%%%

We consider now equalized actions on the orthogonal Grassmannian of $k$-dimensional subspaces of the $2n$-dimensional standard representation $V$, isotropic with respect to a given nondegenerate symmetric form. We choose coordinates $(x_0,\dots,x_{2n-1})$ in $V$ (with respect to a basis $\{e_0,\dots,e_{2n-1}\}$) so that this form is given by the matrix:
\[
\begin{pmatrix}[c|c]
0&I_n\\\hline
I_n&0
\end{pmatrix}.
\] 

We have three equalized $\C^*$-actions, coming from the action on $V$ that take the form:
\begin{align*}
	H_1:& & &t \cdot (x_0,\ldots,x_{2n-1})=(tx_0,x_1,\ldots,x_{n-1},t^{-1}x_n,x_{n+1},\ldots,x_{2n-1}),\\
	H_{n-1}:& & & t \cdot (x_0,\ldots,x_{2n-1})=(tx_0,\ldots,tx_{n-2},x_{n-1},x_{n},\ldots,x_{2n-2},tx_{2n-1}),\\
	H_n:& & & t \cdot (x_0,\ldots,x_{2n-1})=(tx_0,\ldots,tx_{n-2},tx_{n-1},x_{n},\ldots,x_{2n-2},x_{2n-1}).
\end{align*}

\begin{remark}
Up to reordering the coordinates $x_{n-1},x_{2n-1}$ we have only two types of equalized $\C^*$-actions, $H_1$ and $H_n$. 
\end{remark}

\subsubsection{$H_1$-action}\label{sssec:H1Dn}
The case of the $H_1$-action on the varieties $\DD_n(k)$ is analogous to the case $\DB_n$, Section \ref{ssec:Bn}, and the fixed-point components are Grassmannians of isotropic subspaces in $V_0$, with respect to the restriction of the symmetric form in $V$. We get here four cases:  
\begin{table}[ht!]
\begin{tabular}{|c|c|c|c|c|}
\hline
&$[\mu_L(Y_-),\mu_L(Y_+)]$&$Y_-$&$Y_{(\mu_L(Y_+)+\mu_L(Y_-))/2}$&$Y_+$\\\hline\hline
$k<n-2$&$[0,2]$&$\DD_{n-1}(k-1)$&$\DD_{n-1}(k)$&$\DD_{n-1}(k-1)$\\\hline
$k=n-2$&$[0,2]$&$\DD_{n-1}(n-3)$&$\DD_{n-1}(n-2,n-1)$&$\DD_{n-1}(n-3)$\\\hline
$k= n-1 $&$[0,1]$&$\DD_{n-1}(n-2)$&$\emptyset$&$\DD_{n-1}(n-1)$\\\hline
$k= n $&$[0,1]$&$\DD_{n-1}(n-1)$&$\emptyset$&$\DD_{n-1}(n-2)$\\\hline
\end{tabular}
\end{table}

Note that, as in the $\DB_n$-case, the value of $\mu_L(Y_+)$ changes because the ample generator $L$ of $\Pic(\DD_n(k))$ is the Pl\"ucker line bundle for $k<n-1$, and its square root for $k= n-1$ or $k=n$.

\subsubsection{$H_n$-action}\label{sssec:HnDn}
The case is analogous to the $H_n$-action on the varieties of type $\DC_n$. In fact, the linear action on $V$ is the same as \eqref{equation:Hn-actionProjectiveSpace}.
We will denote again
\[
V_-=\left\{x_n=\ldots=x_{2n-1}=0\right\},\qquad V_+=\left\{x_0=\ldots=x_{n-1}=0\right\}.
\]
They are isotropic subspaces of maximal dimension and, up to renumbering the nodes of $\DD_n$ we may assume that $V_-\in \DD_n(n)$. 
We will then distinguish two cases:
\begin{itemize}
\item[(A)] $n$ is even and $V_+\in\DD_n(n)$. 
\item[(B)] $n$ is odd and $V_+\in \DD_n(n-1)$.
\end{itemize}
Hence for the $H_n$-action on $\DD_n(k)$ we have five cases: 

\begin{table}[h!]
\begin{tabular}{|c|c|c|c|c|}
\hline
&$[\mu_L(Y_-),\mu_L(Y_+)]$&$Y_-$&$Y_s$&$Y_+$\\\hline\hline
$k<n-1$&$[0,k]$&$\DA_{n-1}(k)$&$\DA_{n-1}(k-s,n-s)$&$\DA_{n-1}(n-k)$\\\hline
$k= n-1$ (A)&$[1, n/2]$&$\DA_{n-1}(n-1)$&$\DA_{n-1}(n-1-2s)$&$\DA_{n-1}(1)$\\\hline
$k= n$ (A)&$[0,n/2]$&$\point$&$\DA_{n-1}(2s)$&$\point$\\\hline
$k= n-1$ (B)&$[0,(n-1)/2]$&$\DA_{n-1}(n-1)$&$\DA_{n-1}(n-1-2s)$&$\point$\\\hline
$k= n$ (B)&$[0,(n-1)/2]$&$\point$&$\DA_{n-1}(2s)$&$\DA_{n-1}(1)$\\\hline
\end{tabular}
\end{table}

%%%%%%%%%%%%%%%%%%
%%%%%%%%%%%%%%%%%%
%%%%%%%%%%%%%%%%%%
%%%%%%%%%%%%%%%%%%

%%%%%%%%%%%%%%%%%%%%%%%%%%%%%%%%%%%%%%%%%%%%%%%%%%%%%%%%%%%%%%%%%%%%%%%%%%%%%%%

%%%%%%%%%%%%%%%%%%%%%%%%%%%%%%%%%%%%%%%%%%%%%%%%%%%%%%%%%%%%%%%%%%%%%%%%%%%%%%%
\section{$\C^*$-actions and inversion in Jordan algebras: balanced gradings}\label{sec:j-str}
%%%%%%%%%%%%%%%%%%%%%%%%%%%%%%%%%%%%%%%%%%%%%%%%%%%%%%%%%%%%%%%%%%%%%%%%%%%%%%%

%%%%%%%%%%%%%%%%%%%%%%%%%%%%%%%%%%%%%%%%%%%%%%%%%%%%%%%%%%%%%%%%%%%%%%%%%%%%%%%

This section is devoted to study equalized $\C^*$-actions on rational homogeneous varieties whose associated birational map is a Cremona transformation of a projective space, and we will prove that they can be understood in the language of Jordan algebras.

Consider a 1-dimensional torus $H_i\subset H \subset G$, determined by a height map $\sigma_i$ producing a short grading (see Section \ref{sssec:shortgr}):
\[
\fg=\fg_{-}\oplus\fg_0\oplus\fg_+.
\]
Again, let $w_0\in \No_G(H)\subset G$ be an element in the class of the longest element of the Weyl group $W$ of $G$ (see Section \ref{ssec:notgrps}). 

\begin{definition}\label{def:balanced}
The short grading of $\fg$ determined by $\sigma_i:\Mo(H)\to \Z$ is said to be \emph{balanced}  if $t^{-1}=\conj_{w_0}(t)$ for every $t\in H_i$. In this case we will say that the action of $H_i$ on every RH variety $G/P$ is \emph{balanced}.
\end{definition}

Note that this property does not depend on the choice of the particular representative $w_0\in \No_G(H)$. 
In particular, denoting by $L_g$ the left multiplication by $g$ for every $g\in G$, we have the following trivial statement:

\begin{lemma}\label{lem:balanced}
With the notations as above, if the short grading of $\fg$ determined by $\sigma_i:\Mo(H)\to \Z$ is balanced, then for every parabolic subgroup $P\subset G$ and for every $t \in H_i$ the following diagram is commutative:
\[
\xymatrix{G/P\ar[d]_{L_{w_0}}\ar[r]^{L_{t^{-1}}}&G/P\ar[d]^{L_{w_0}}\\
G/P\ar[r]_{L_t}&G/P}
\]
\end{lemma}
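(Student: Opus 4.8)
The plan is to reduce the commutativity of the square to a single identity among group elements and then read that identity off from the balanced condition. First I would recall the defining action: for $g\in G$ the map $L_g\colon G/P\to G/P$ is left multiplication $hP\mapsto ghP$, which is well-defined independently of the chosen representative $h$ of the coset, since $ghP$ depends only on $hP$. This is all that is needed to make the four arrows of the diagram meaningful.

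Next I would compute the two compositions on an arbitrary coset $hP$. Going right then down, $L_{w_0}\circ L_{t^{-1}}(hP)=w_0t^{-1}hP$, while going down then right, $L_t\circ L_{w_0}(hP)=tw_0hP$. Thus the square commutes (for the given $P$) precisely when $w_0t^{-1}$ and $tw_0$ induce the same self-map of $G/P$. The cleanest route is to prove the stronger statement that these two elements coincide in $G$, namely $w_0t^{-1}=tw_0$; this settles the commutativity for \emph{every} parabolic subgroup $P$ at once, as required by the statement.

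It then remains to observe that $w_0t^{-1}=tw_0$ is merely a rewriting of the balanced hypothesis. Indeed $\conj_{w_0}(t)=w_0tw_0^{-1}$, and Definition \ref{def:balanced} gives $w_0tw_0^{-1}=t^{-1}$ for every $t\in H_i$; taking inverses of both sides yields $w_0t^{-1}w_0^{-1}=t$, that is $w_0t^{-1}=tw_0$. Substituting this back into the two compositions shows $w_0t^{-1}hP=tw_0hP$ for all $h$, which is the desired commutativity.

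Since the argument is a purely formal manipulation, I do not expect a genuine obstacle; the only point deserving a remark is that the balanced condition a priori concerns the torus $H_i$ and the class of $w_0$, so one should note — as the text already does after Definition \ref{def:balanced} — that the property, and hence the identity $w_0t^{-1}=tw_0$, is independent of the particular representative $w_0\in\No_G(H)$ chosen for the longest Weyl element. This justifies working with the fixed $w_0$ appearing in the diagram and completes the proof.
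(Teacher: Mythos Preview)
Your proof is correct and is exactly the computation the paper has in mind: the authors introduce the lemma as ``the following trivial statement'' and give no further argument, since the commutativity is an immediate reformulation of the identity $\conj_{w_0}(t)=t^{-1}$ from Definition~\ref{def:balanced}. Your write-up simply spells out this one-line verification.
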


Then we may prove the following:

\begin{lemma}\label{lem:balshort}
	The complete list of balanced short gradings on simple Lie algebras is given in the following table:
 \begin{table}[ht!]
\begin{tabular}{|c||c|c|c|c|c|c|}
\hline
	$\fg$ & $\DA_{2n-1}$ & $\DB_n$ & $\DC_n$ & $\DD_n$ & $\DD_{2n}$  & $\DE_7$ \\
	\hline
	$\sigma_i$ & $\sigma_n$ & $\sigma_1$ & $\sigma_n$ & $\sigma_1$ & $\sigma_{2n},\sigma_{2n-1}$ & $\sigma_7$\\\hline
\end{tabular}
\caption{Balanced short gradings of simple Lie algebras. \label{tab:balshort}}
\end{table}		
\end{lemma}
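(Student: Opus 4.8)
The plan is to reduce the balanced condition, which is phrased on the level of the torus $H_i$, to an identity on the cocharacter lattice, and then to read off the answer from the action of the longest Weyl element on the fundamental coweights. Recall from Remark~\ref{rem:definitionHi} that $H_i$ is the image of the cocharacter $\sigma_i^*:\C^*\to H$ associated with the height map $\sigma_i\in\Na(H)$; since $G$ is adjoint, $\Na(H)=\Hom(\Mo(H),\Z)$ is the coweight lattice, and the relation $\sigma_i(\alpha_j)=\delta_{ij}$ identifies $\sigma_i$ with the fundamental coweight $\omega_i^\vee$. For $g\in\No_G(H)$ representing $w\in W$ one has $\conj_g(\lambda(s))=(w\cdot\lambda)(s)$ for every cocharacter $\lambda$, so writing $t=\sigma_i^*(s)$ the defining equality $t^{-1}=\conj_{w_0}(t)$ becomes $(-\sigma_i)^*(s)=(w_0\cdot\sigma_i)^*(s)$ for all $s\in\C^*$. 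Hence the grading is balanced if and only if $w_0\cdot\sigma_i=-\sigma_i$ in $\Na(H)$.

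Next I would compute $w_0\cdot\sigma_i$ explicitly. The longest element is an involution with $w_0(\alpha_j)=-\alpha_{j^*}$, where $j\mapsto j^*$ is the opposition involution $\theta=-w_0$ acting as a diagram automorphism of $\cD$. Pairing against the simple roots gives, for every $j$ (using that $w_0$ is an involution and that the pairing is $W$-invariant),
\[
\langle\alpha_j,w_0\cdot\sigma_i\rangle=\langle w_0\alpha_j,\sigma_i\rangle=-\delta_{j^*,i}=-\delta_{j,i^*}=\langle\alpha_j,-\sigma_{i^*}\rangle,
\]
so that $w_0\cdot\sigma_i=-\sigma_{i^*}$. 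Therefore the balanced condition $w_0\cdot\sigma_i=-\sigma_i$ is equivalent to $\sigma_{i^*}=\sigma_i$, i.e.\ to $i^*=i$: a short grading $\sigma_i$ is balanced precisely when the node $i$ is fixed by the opposition involution.

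It then remains to run through the short gradings of Table~\ref{tab:short} and to test this combinatorial condition against the standard description of $\theta=-w_0$ for each type: $\theta=\id$ for $\DB_n$, $\DC_n$ and $\DE_7$; $\theta$ is the flip $i\mapsto n+1-i$ for $\DA_n$; $\theta$ swaps the two spinor nodes $n-1,n$ (and fixes the rest) for $\DD_n$ with $n$ odd, while $\theta=\id$ for $\DD_n$ with $n$ even; and $\theta$ swaps $1\leftrightarrow 6$, $3\leftrightarrow 5$ for $\DE_6$. Feeding the admissible nodes of Table~\ref{tab:short} into this, the node is fixed exactly for $\DA_{2n-1}$ at $\sigma_n$, for $\DB_n$ at $\sigma_1$, for $\DC_n$ at $\sigma_n$, for $\DD_n$ at $\sigma_1$ (for every $n$), for $\DD_{2n}$ at $\sigma_{2n-1}$ and $\sigma_{2n}$, and for $\DE_7$ at $\sigma_7$; the nodes $\sigma_1,\sigma_6$ of $\DE_6$ are never fixed and the spinor nodes of $\DD_n$ fail for $n$ odd, so they are excluded. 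This reproduces exactly Table~\ref{tab:balshort}. I expect the only delicate point to be the parity bookkeeping in type $\DA_n$ (the fixed node $i=(n+1)/2$ forces $n$ odd) and in type $\DD_n$ (the spinor nodes are fixed only for $n$ even); everything else is an immediate application of the identity $w_0\cdot\sigma_i=-\sigma_{i^*}$.
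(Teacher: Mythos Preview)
Your argument is correct and follows essentially the same route as the paper's own proof: both reduce the balanced condition to the equality $w_0\cdot\sigma_i=-\sigma_i$ (the paper phrases it as $t^{w_0(\beta)}=t^{-\beta}$ for all $\beta$, which is the same identity read on characters rather than cocharacters), and both then use that $-w_0$ acts on simple roots via the opposition involution of the Dynkin diagram, so the condition becomes $i^*=i$. Your write-up is slightly more streamlined in that you identify $\sigma_i$ with the fundamental coweight $\omega_i^\vee$ from the outset and carry out the case check explicitly, whereas the paper passes through the faithful adjoint representation and the auxiliary subgroup $H'$ before invoking the same diagram-automorphism description of $w_0$.
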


\begin{proof}
We will use that, being $G$ the adjoint group of $\fg$, the adjoint representation of $G$ is faithful. Then the balanced condition is equivalent to saying that $$\Ad_{w_0}\circ\Ad_t\circ\Ad_{w_0^{-1}}=\Ad_{t^{-1}}\quad\mbox{for every $t\in H_i$,}$$ as endomorphisms of $\fg$. Since they trivially coincide on the Cartan subalgebra $\fh$, the two maps above are equal if they coincide on $\fg_\beta$ for every $\beta\in \Phi$. Then the balanced condition is equivalent to saying that $H_i$ is contained in the following subgroup of $H$:
\begin{equation}\label{eq:balanced}
H':=\left\{t\in H|\,\, t^{w_0(\beta)}=t^{-\beta},\,\,\mbox{for every }\beta\in \Phi\right\}.
\end{equation}
Here we are denoting by $w_0$ the automorphism of $\Mo(H)$ induced by $w_0$, that is the class of $w_0$ in the Weyl group $W$. It is a well-known fact (see for example \cite{Bourb}) that $w_0$ coincides with $-\id$ in the cases in which the Dynkin diagram $\cD$ of $\fg$ has no non-trivial automorphisms and in the case $\DD_{2n}$. We conclude that in these cases $H'=H$, and so every short grading is balanced.

We are then left with the cases $\DA_n$, $\DD_{2n-1}$ and $\DE_6$, in which $w_0$ is equal to the composition of $-\id$ with the homomorphism induced by the permutation of the positive simple roots $\Delta$ given by the only nontrivial automorphism $s$ of $\cD$. Then a straightforward computation in each case shows that the complete list of balanced short gradings is the one given in Table \ref{tab:balshort}.
\end{proof}

In particular, consider the parabolic subgroups $P_{\pm}\subset G$ defined by $H_i$ and the corresponding short grading, we obtain the following consequence:

\begin{corollary}\label{cor:balshort}
With the notation above, consider the $H_i$-action on a rational homogeneous $G$-variety $G/P$. Then the following conditions are equivalent:
\begin{itemize}
\item[(i)] the $H_i$-action on $G/P$ is equalized and has isolated extremal fixed-points; 
\item[(ii)] $P=P_+$ and the grading of $\fg$ defined by $H_i$ is short and balanced. 
\end{itemize}
\end{corollary}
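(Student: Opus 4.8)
The plan is to reduce the equivalence to a computation of the dimensions of the extremal fixed-point components $Y_\pm$. By Proposition \ref{prop:Hi} the equalization of the $H_i$-action on $G/P$ is equivalent to the grading induced by $\sigma_i$ being short, which matches the equalized hypothesis of (i) with the shortness asserted in (ii); thus the genuine content of the corollary is that, for a short grading, the extremal fixed points are isolated if and only if $P=P_+$ and the grading is balanced. I would read off $Y_\pm$ from the presentations \eqref{eq:sinkSource} and \eqref{eq:parabW} and translate ``being a point'' into conditions on roots.

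First I would handle the sink. By \eqref{eq:sinkSource} we have $Y_-=G^\perp/P^\perp$ with $P^\perp=G^\perp\cap P$, so $Y_-$ is a point precisely when $\fg^\perp\subset\fp$; reading \eqref{eq:parabW} for $w=e$, this holds iff every root $\beta$ with $\sigma_i(\beta)=0$ also satisfies $\sigma_k(\beta)=0$. If $k\ne i$ the simple root $\alpha_k$ already violates this (it has $\sigma_i(\alpha_k)=0$ but $\sigma_k(\alpha_k)=1$), so $Y_-$ is positive-dimensional; whereas if $k=i$ then $\fg^\perp\subset\fg_0\subset\fp_+=\fp$, so $Y_-$ is a point. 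Hence $Y_-$ is isolated exactly when $k=i$, that is, when $P=P_+$.

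The main step is the source. Assuming $P=P_+$, by \eqref{eq:sinkSource} $Y_+=G^\perp/P^\perp_{w_0}$ is a point iff $\fg^\perp\subset\Ad_{w_0}(\fp_+)$. Writing $S_r:=\{\beta\in\Phi:\sigma_i(\beta)=r\}$ and using $w_0^2=e$ in $W$, this containment amounts to $\sigma_i(w_0\beta)\ge 0$ for all $\beta\in S_0$; since $S_0$ is stable under $\beta\mapsto-\beta$, applying the inequality to $-\beta$ forces $\sigma_i(w_0\beta)=0$, i.e. $w_0(S_0)=S_0$. The hard part will be to show that, for a \emph{short} grading, this is equivalent to the balanced condition $\sigma_i(w_0\beta)=-\sigma_i(\beta)$ of \eqref{eq:balanced}: here one uses that $w_0$ reverses positivity, so every $\beta\in S_1$ is a positive root with $w_0\beta<0$ and, by shortness, $\sigma_i(w_0\beta)\in\{-1,0\}$; once $w_0$ stabilizes $S_0$ it must therefore carry $S_1$ into $S_{-1}$ and conversely, which is exactly $\sigma_i\circ w_0=-\sigma_i$. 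Conversely, balancedness yields $\Ad_{w_0}(\fp_+)=\fp_-$, so that both $P^\perp$ and $P^\perp_{w_0}$ coincide with $G^\perp$ and $Y_\pm$ are points. Combining this with the sink computation gives (i)$\iff$(ii), the only delicate point being the use of shortness to convert the set-theoretic stability $w_0(S_0)=S_0$ into the sign-reversing identity defining balancedness.
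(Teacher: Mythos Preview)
Your argument is correct and in fact cleaner than the paper's own proof. The paper handles the direction (i)$\Rightarrow$(ii) by appeal to the explicit classification: once $P=P_+$ and the grading is short, it checks case by case (using the tables of Section~\ref{sec:equalGrass} for the classical types and the cited references for $\DE_6$ and $\DE_7$) that an isolated source occurs precisely in the balanced cases of Table~\ref{tab:balshort}. You instead give a uniform root-theoretic argument: from \eqref{eq:parabW} you extract that $Y_+$ is a point iff $w_0(S_0)=S_0$, and then you use shortness together with the fact that $w_0$ reverses positivity to upgrade this set-theoretic stability to the sign-reversing identity $\sigma_i\circ w_0=-\sigma_i$, which by \eqref{eq:balanced} is exactly balancedness. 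This avoids any case analysis and makes transparent \emph{why} shortness is needed for the equivalence (without it, $w_0(S_0)=S_0$ would not force $w_0(S_1)\subset S_{-1}$). The paper's approach, by contrast, ties the result to the concrete projective descriptions that are used elsewhere in the article, so it costs nothing extra in context; but as a stand-alone proof yours is more conceptual.
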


\begin{proof}
We have already noted that the $H_i$-action is equalized on every $G/P$ if and only if it is given by a short grading of $\fg$ (Proposition \ref{prop:Hi}). 
On the other hand, consider the $H_i$-action on $G/P$. From Section \ref{sssec:fpc} we know that the sink of the action is isolated is and only if $G^\perp\subset P$, which holds if and only if $P=P_+$. If, moreover, the action is balanced, then by definition, the point $w_0P_+$ (which is always contained in the source of the $H_i$-action on $G/P_+$) will be an isolated fixed-point, as well. Finally, assume that the $H_i$-action on $G/P_+$ is equalized and has $w_0P_+$ as an isolated source. From the description of the equalized actions of Section \ref{sec:equalGrass} for the classical cases, \cite[Proposition 5.2]{Fra1} for the $\DE_6$-varieties and \cite[Theorem 8.9]{WORS1} for the $\DE_7$-variety, we get that our conditions are satisfied precisely in the cases listed in Table \ref{tab:balshort}, i.e. in the cases in which the grading is balanced.  
\end{proof}

We will now study the birational map associated with the $\C^*$-action induced by a balanced short grading as above, linking it with an inversion map through the map $L_{w_0}$.

The unipotent radicals of $P_\pm$ -- that we denote by $G_{\pm}\subset P_{\pm}$ -- have $\fg_{\pm}$ as Lie algebras. Being $G$ the adjoint group of a semisimple Lie algebra, $G_\pm\subset G$ consists only of nilpotent elements and then it is known that the exponential maps $\exp:\fg_\pm \to G_\pm$ are isomorphisms of varieties (see \cite[Proposition 1.2]{Huy}).

\begin{lemma}
	The composition of $\exp:\fg_-\to G_-\subset G$ with the projection onto $G/P_+$ is an open immersion, that we denote by $\ex:\fg_-\hookrightarrow G/P_+$. 
	Its image, which we still denote by $\fg_-$, is an open neighborhood of $eP_+\in G/P_+$.
\end{lemma}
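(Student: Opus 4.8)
The plan is to identify the image of $\ex$ with the \emph{big cell} of $G/P_+$ attached to the opposite parabolic $P_-$. The starting point is the vector-space decomposition $\fg=\fg_-\oplus\fp_+$, which is immediate from the short grading $\fg=\fg_-\oplus\fg_0\oplus\fg_+$ together with $\fp_+=\fg_0\oplus\fg_+$. In particular $\fg_-$ is a linear complement of $\fp_+=\Lie(P_+)$ inside $\fg=T_eG$, and $\fg_-\simeq\fg/\fp_+=T_{eP_+}(G/P_+)$.

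The key geometric input is the multiplication map $m\colon G_-\times P_+\to G$, $(g,p)\mapsto gp$. I would first compute its differential at $(e,e)$: it is the addition map $\fg_-\oplus\fp_+\to\fg$, which is an isomorphism by the decomposition above, so $m$ is étale at $(e,e)$. Using the equivariance $m(g_0g,pp_0)=g_0\,m(g,p)\,p_0$ and the fact that left and right translations are automorphisms of $G$ (and of $G_-\times P_+$), étaleness propagates to every point, so $m$ is étale everywhere. For injectivity I would invoke $G_-\cap P_+=\{e\}$: since $P_\pm$ are opposite parabolics with common Levi part $G_0$ (Remark \ref{rem:definitionHi}), one has $P_-\cap P_+=G_0$ and $G_-\cap G_0=\{e\}$ (the unipotent radical of $P_-$ meets the reductive group $G_0$ trivially), whence $G_-\cap P_+=G_-\cap G_0=\{e\}$. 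This forces $m$ to be injective, because $g_1p_1=g_2p_2$ gives $g_2^{-1}g_1=p_2p_1^{-1}\in G_-\cap P_+$. An étale injective morphism between smooth complex varieties is an open immersion, so $m$ identifies $G_-\times P_+$ with an open subset $\Omega:=G_-P_+\subset G$ (cf. \cite{BT}).

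It then remains to descend to $G/P_+$. The set $\Omega$ is right $P_+$-invariant, hence $\Omega/P_+$ is a well-defined open subset of $G/P_+$ containing $eP_+$, and under $m$ the right $P_+$-action on $\Omega$ corresponds to right multiplication on the second factor, so that $\Omega/P_+\simeq G_-$. Under this identification the restriction of the projection $\pi\colon G\to G/P_+$ to $G_-$ becomes the identity and is therefore an isomorphism $G_-\xrightarrow{\ \sim\ }\Omega/P_+$ (bijectivity again using $G_-\cap P_+=\{e\}$). Precomposing with the isomorphism of varieties $\exp\colon\fg_-\xrightarrow{\ \sim\ }G_-$ recalled just above, I conclude that $\ex=\pi\circ\exp$ is an open immersion of $\fg_-$ onto the open neighborhood $\Omega/P_+$ of $eP_+$.

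The main obstacle is the passage from ``étale at the identity'' to ``open immersion'': étaleness everywhere follows formally from homogeneity, but one must separately secure injectivity, and the only nontrivial ingredient there is the identity $G_-\cap P_+=\{e\}$ coming from the opposite-parabolic structure. Once this is in place, the differential computation and the descent to $G/P_+$ are routine.
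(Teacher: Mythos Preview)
Your proof is correct and follows essentially the same route as the paper: both arguments rest on the two facts that $G_-P_+$ is open in $G$ and that $G_-\cap P_+=\{e\}$, and then descend to $G/P_+$. The only difference is one of detail: the paper simply cites \cite{BT} for these two facts, whereas you supply self-contained arguments (the étaleness-by-homogeneity of the multiplication map for openness, and the chain $G_-\cap P_+\subset G_-\cap P_-\cap P_+=G_-\cap G_0=\{e\}$ for the intersection).
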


\begin{proof}
	First of all, $\fg_-$ and $G/P_+$ have the same dimension.
	At this point, denoted by $\pi: G \to G/P_+$ the quotient map, let $U \subset G$ be an open set. By definition, $\pi(U)$ is open in $G/P_+$ if and only if $\pi^{-1}(\pi(U))$ is open in $G$:
	\[
	\pi(U)=\{uP: u \in U\}, \qquad \pi^{-1}(\pi(U))=\{up: u \in U,\, p \in P_+\}=U \cdot P_+.
	\]
	By \cite[p. 86]{BT}, $G_- \cdot P_+$ is open in $G$, hence $\pi(G_-)$ is open in $G/P_+$. It remains to prove that $\pi(G_-) \simeq G_-$, but this follows from the fact that, since $G_-$ is the unipotent radical of the opposite parabolic group of $P_+$ (see \cite[p. 88]{BT}), we have $G_- \cap P_+=\{e\}$.
	We have obtained that $\fg_- \simeq G_- \simeq \pi(G_-) \subset G/P_+$ as an open subset.
\end{proof}

Furthermore, denoting by $L_t:G/P_+\to G/P_+$ the left multiplication with $t$, for every $t\in H_i$, we have a commutative diagram:
\[
\xymatrix{\fg_-\ar@{_(->}+<0pt,-12pt>;[d]_\ex \ar[r]^{t^{-1}} &\fg_- \ar@{_(->}+<0pt,-12pt>;[d]_\ex\\
G/P_+ \ar[r]^{L_t} & G/P_+}
\] 
where the upper horizontal map is the multiplication with $t^{-1}$. In other words, $\fg_-$ can be identified with a $\C^*$-invariant open subset of $G/P_+$, and the action of $\C^*$ on $\fg_-$ is the inverse of the homothetical action. 

On the other hand, we have another open immersion:
\[
	\ex':\fg_+ \lra G/P_+, \qquad x \longmapsto \exp(x)w_0P_+,
\]
whose image is a $\C^*$-invariant open neighborhood (denoted by $\fg_+$) of $w_0P_+\in G/P_+$, that fits into the following commutative diagram:
\[
\xymatrix{\fg_-\ar@{_(->}+<0pt,-12pt>;[d]_\ex\ar[r]^{\Ad_{w_0}}&\fg_+\ar@{_(->}+<0pt,-12pt>;[d]_{\ex'}\\
G/P_+\ar[r]^{L_{w_0}}&G/P_+.}
\]
Intersecting $\fg_-$ and $\fg_+$ inside $G/P_+$, and restricting $\Ad_{w_0}$ to $\Ad_{w_0}^{-1}(\fg_-\cap \fg_+)$ we get a birational map that we denote 
\[
\jmath:\fg_-\dashrightarrow \fg_-.
\]
By construction, we have a commutative diagram:
\begin{equation}\label{eq:inversion}
\xymatrix@R=2mm{&&\fg_-\ar[dd]^{\Ad_{w_0}}\\\fg_-\ar@{-->}[rru]^{\jmath}\ar@{-->}[rrd]_{\psi_a}&&\\&&\fg_+.}
\end{equation}
We note that, by Lemma \ref{lem:balanced} and the fact that the inclusions $\fg_\pm\subset G/P_+$ are $\C^*$-invariant, $\jmath$ satisfies:
\begin{equation}\label{eq:jC*}
\jmath(tx)=t^{-1}x, \mbox{ for all }t\in H_i,\,\, x\in \fg_-.
\end{equation}
so that the inversion map can be thought of as a birational automorphism counterpart of the birational map $\psi_a$ induced by the $H_i$-action (see Section \ref{ssec:basic}).

The following statement is a consequence of \cite[2.21]{Spr73}.

\begin{proposition}\label{prop:inverseJ}
With the above notation, if the short grading on $\fg$ determined by $\sigma_i:\Mo(H)\to \Z$ is balanced, then there exists a unique structure of Jordan algebra on $\fg_-$ having $\jmath$ as an inverse map.
\end{proposition}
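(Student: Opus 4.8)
The plan is to reduce the statement to Springer's characterization of the inversion map of a Jordan algebra, \cite[2.21]{Spr73}: a finite-dimensional vector space $V$, equipped with a base point $e$ and a birational self-map $j$ that is homogeneous of degree $-1$ and fixes $e$ with differential $-\id$ there, carries a unique Jordan algebra structure with unit $e$ whose inversion is exactly $j$, once the remaining structural (norm) axiom of a J-structure is verified. Thus it suffices to exhibit $\fg_-$, together with a suitable unit $e$ and with the birational involution $\jmath$ of \eqref{eq:inversion}, as a J-structure in Springer's sense, and then transport his conclusion.

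First I would record the explicit form of $\jmath$. Chasing the commutative squares preceding \eqref{eq:inversion}, together with the identification $\psi_a=(\ex')^{-1}\circ\ex$ of the birational map of the action with the change of Bia\l ynicki--Birula charts, gives $\jmath=\ex^{-1}\circ L_{w_0}^{-1}\circ\ex$ as a birational self-map of $\fg_-$. The homogeneity $\jmath(tx)=t^{-1}x$ of degree $-1$ is then precisely \eqref{eq:jC*}, issuing from Lemma \ref{lem:balanced} and the $\C^*$-invariance of $\ex$. The involutivity $\jmath^2=\id$ follows from the same formula: since $w_0^2\in H$, left multiplication by it descends on the chart $\ex$ to $\Ad_{w_0^2}|_{\fg_-}$, so one is reduced to checking that $w_0^2$ acts trivially on $\fg_-$; this is where the balanced hypothesis enters, and it can be secured through the choice of representative $w_0\in\No_G(H)$.

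Next I would produce the unit. Applying Jacobson--Morozov to the short grading, I would fix an $\fsl_2$-triple $(e,h_0,f)$ adapted to $\sigma_i$, with $e\in\fg_-$ a generic (invertible) element, $h_0\in\fg_0$ proportional to the grading element $d\sigma_i^*$, and $f\in\fg_+$; by balancedness $\Ad_{w_0}$ interchanges $\fg_-$ and $\fg_+$, so $\Ad_{w_0}(e)\in\fg_+$ may, after rescaling, be used as the opposite nilpotent $f$. This choice makes $e$ a fixed point of $\jmath$ and yields $d\jmath_e=-\id$, both of which I would check directly from $\jmath=\ex^{-1}\circ L_{w_0}^{-1}\circ\ex$ and the triple relations; generically $e$ is the unique such fixed point, which is what forces uniqueness of the eventual unit. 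The final structural (norm) axiom is the genuinely Lie-theoretic point, and I would deduce it from the grading together with the $\fsl_2$-action, so that \cite[2.21]{Spr73} applies and delivers the desired Jordan algebra, its uniqueness (including that of $e$) being part of Springer's equivalence.

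The main obstacle I anticipate is precisely the matching of our Lie-theoretic data to Springer's axiom set: while the homogeneity is immediate and the fixed point $e$ with its differential are routine once the adapted $\fsl_2$-triple is in place, verifying the remaining norm/quadratic axiom --- and confirming that $w_0^2$ acts trivially on $\fg_-$ so that $\jmath$ is a bona fide involution --- is the delicate step, and it is exactly here that the balanced hypothesis on the grading is indispensable.
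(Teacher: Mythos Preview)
Your strategy of reducing to Springer's \cite[2.21]{Spr73} is exactly the paper's strategy, and you correctly identify the homogeneity condition $\jmath(tx)=t^{-1}\jmath(x)$ as one of the required inputs. However, your reading of what \cite[2.21]{Spr73} actually demands is off, and this leads you both to do unnecessary work and to leave the crucial hypothesis unverified.

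In the paper's formulation, Springer's criterion in this graded setting consists of three conditions (A), (B), (C): condition (A) is that the grading is short (your ``equalized'' input), condition (B) is precisely the degree $-1$ homogeneity \eqref{eq:jC*}, and condition (C) is that $G_0$ acts on $\fg_-$ with an open orbit, i.e.\ that $(\fg_-,G_0)$ is a prehomogeneous vector space. The paper dispatches (C) by a direct citation of \cite[Theorem~2.1]{Tev05}. What you call ``the final structural (norm) axiom'' and flag as ``the delicate step'' is exactly this prehomogeneity condition; saying you ``would deduce it from the grading together with the $\fsl_2$-action'' is too vague to count as a verification, and in any case the clean argument is the one the paper gives.

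Conversely, several ingredients of your outline are not needed for \cite[2.21]{Spr73} as the paper applies it. You do not have to produce the unit $e$ by Jacobson--Morozov, nor check $d\jmath_e=-\id$ by hand, nor establish that $\jmath$ is an honest involution: the existence of the unit and the involutivity of the inversion are \emph{outputs} of Springer's equivalence once (A), (B), (C) hold, not hypotheses you must supply. In particular your worry about whether $w_0^2$ acts trivially on $\fg_-$ is a red herring here (and is not in general true for an arbitrary representative $w_0\in\No_G(H)$); the proof does not pass through it.

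So the fix is short: keep your reduction to \cite[2.21]{Spr73}, drop the $\fsl_2$-triple and involutivity detours, and replace the vague ``norm axiom'' step by the observation that $G_0$ has an open orbit on $\fg_-$, for which you may invoke \cite[Theorem~2.1]{Tev05}.
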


\begin{proof}
Following \cite{Spr73}, the statement follows from the fact that the map $\jmath$ defines a {\em J-structure} on $\fg_-$, that determines completely a structure of Jordan algebra on $\fg_-$ whose inverse map is $\jmath$. Furthermore, \cite[2.21]{Spr73} tells us that in our case one needs to check three conditions (A,B,C). Condition (A) follows from the fact that our grading is equalized, and (B) is precisely  Equation (\ref{eq:jC*}). The last condition, (C), translated into our notation says that $G_0$ acts with an open orbit on $\fg_-$, and this follows by direct application of \cite[Theorem~2.1]{Tev05}.
\end{proof}

This concludes the proof of Theorem \ref{thm:Jordan}.

%%%%%%%%%%%%%%%%%%
%%%%%%%%%%%%%%%%%%
%%%%%%%%%%%%%%%%%%
%%%%%%%%%%%%%%%%%%

%%%%%%%%%%%%%%%%%%%%%%%%%%%%%%%%%%%%%%%%%%%%%%%%%%%%%%%%%%%%%%%

%%%%%%%%%%%%%%%%%%%%%%%%%%%%%%%%%%%%%%%%%%%%%%%%%%%%%%%%%%%%%%%

%%%%%%%%%%%%%%%%%%%%%%%%%%%%%%%%%%%%%%%%%%%%%%%%%%%%%%%%%%%%%%%
\section{Proof of Theorem \ref{thm:unbalanced} and Theorem \ref{thm:E7}}\label{sec:proofThm1.2}
%%%%%%%%%%%%%%%%%%%%%%%%%%%%%%%%%%%%%%%%%%%%%%%%%%%%%%%%%%%%%%%

%%%%%%%%%%%%%%%%%%%%%%%%%%%%%%%%%%%%%%%%%%%%%%%%%%%%%%%%%%%%%%%

%%%%%%%%%%%%%%%%%%%%%%%%%%%%%%%%%%%%%%%%%%%%%%%%%%%%%%%%%%%%%%%

We have already seen in Section \ref{sec:j-str} that, when a $\C^*$-action on an RH variety $G/P$ is determined by a balanced short grading (and its sink and source are isolated), the induced birational map (which is a Cremona transformation) is determined, up to composition with a projectivity, by the inverse map of a \emph{unique} Jordan algebra structure on the tangent space of $G/P$. 

Suppose now that the sink and source are not isolated, then the birational map $\psi:\cG_-\dashrightarrow\cG_+$ induced by the action has as domain and codomain (which are, by the balancedness hypothesis, isomorphic) the projectivizations of two homogeneous vector bundles on two RH varieties of Picard number one. In particular, $\cG_{\pm}$ have Picard number two. Moreover, it has been proved in \cite[Section 4 and Corollary 4.12 (iii) in particular]{WORS3} that $\cG_{\pm}$ are Mori Dream Spaces, isomorphic in codimension one. More precisely, following \cite{WORS3}, we may state:  

\begin{proposition}\label{prop:unique}
Let $G$ be a simple algebraic group, whose Lie algebra $\fg$ admits a short and balanced grading. Let $X=G/P$ be an RH variety of Picard number one, endowed with the $\C^*$-action associated to the grading. Assume moreover that the sink and the source $\cG_{\pm}$ are not isolated fixed-points. Then:
\begin{itemize}[leftmargin=*]
\item The quotients $\cG_{\pm}$ are Mori dream spaces, corresponding to two different nef chambers of $\overline{\Mov(\cG_-)}=\overline{\Mov(\cG_+)}$.
\item There are as many nef chambers of $\overline{\Mov(\cG_-)}$ as $\delta:=\mu_L(Y_+)-\mu_L(Y_-)$, where $L$ denotes the ample generator of $\Pic(X)$. Each nef chamber corresponds to a geometric quotient of $X$ by the action of $\C^*$, more precisely
$$
\cG_k:=\Proj\left(\bigoplus_{m\in 2\Z_{\geq 0}}\HH^0\big(X,L^{m}\big)_{((2k-1)m/2)}\right),\qquad k=1,\dots, \delta.
$$
\item We have $\cG_1=\cG_-$ and $\cG_\delta=\cG_+$. Each of the nef cones of $\cG_{\pm}$ contains an extremal ray of $\Mov(\cG_-)$.
\item The map $\psi:\cG_-\dashrightarrow\cG_+$ can be factorized as the sequence of the birational maps:
$$
\cG_1=\cG_-\dashrightarrow \cG_2 \dashrightarrow\dots \dashrightarrow \cG_\delta=\cG_+,
$$
which are Atiyah flips corresponding to wall crossings in $\overline{\Mov(\cG_-)}$.
\end{itemize}
\end{proposition}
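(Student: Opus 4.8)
The statement is, as the surrounding text makes explicit, a translation of the variation-of-GIT analysis of \cite{WORS3} into the present notation, so my plan is to match the hypotheses to that framework and then read off the conclusions, supplying the one piece of case-specific input (the chamber count) that is not formal. First I would record that, by Proposition \ref{prop:Hi}, the short grading makes the $H_i$-action on $X=G/P$ equalized, and that $X$ is a smooth Fano variety of Picard number one with $\Pic(X)=\Z L$; these are precisely the standing assumptions under which the geometric quotients of a $\C^*$-action are studied in \cite[Section~4]{WORS3}. Normalizing the linearization so that $\mu_L(Y_-)=0$, the fixed-point components have $L$-weights in $\{0,1,\dots,\delta\}$ with $\delta=\mu_L(Y_+)$, and the relevant parameter is the slope of a linearization of a power $L^m$ twisted by a character of $\C^*$; the semistable locus, hence the quotient, changes exactly when this slope crosses one of the weights $\mu_L(Y)$.

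Second, for every slope $r$ not equal to an integer weight the semistable and stable loci coincide and the quotient is geometric. Taking for $r$ the midpoint $(2k-1)/2$ of the open interval $(k-1,k)$ yields exactly
\[
\cG_k=\Proj\Big(\bigoplus_{m\in 2\Z_{\ge 0}}\HH^0\big(X,L^m\big)_{((2k-1)m/2)}\Big),\qquad k=1,\dots,\delta,
\]
the even powers being forced by the half-integrality of the slope. By \cite[Corollary~4.12]{WORS3} each $\cG_k$ is a Mori dream space of Picard number two, the $\cG_k$ are pairwise isomorphic in codimension one, and therefore share the movable cone $\overline{\Mov(\cG_-)}=\overline{\Mov(\cG_+)}$, with the open intervals $(k-1,k)$ in bijection with its nef chambers. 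To pin the count to $\delta$ I would then verify, case by case from the explicit fixed-point descriptions of Section \ref{sec:equalGrass} (and \cite{Fra1,WORS1} for the $\DE_6,\DE_7$ cases), that the weight function $\mu_L$ attains \emph{every} integer value in $\{0,1,\dots,\delta\}$: in each table the inner components $Y_s$ are visibly indexed by consecutive weights, so there are $\delta$ open chambers and $\delta-1$ interior walls.

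Third, I would identify the two extremal quotients and the nature of the wall crossings. The two outermost chambers $(0,1)$ and $(\delta-1,\delta)$ recover the Bia{\l}ynicki--Birula quotients $\big(X^{\pm}(Y_\pm)\setminus Y_\pm\big)/\C^*$ of Section \ref{ssec:basic}, which gives $\cG_1=\cG_-$ and $\cG_\delta=\cG_+$, each outermost nef cone containing the corresponding extremal ray of the movable cone; composing the successive wall-crossing maps produces the factorization $\cG_1\dashrightarrow\cdots\dashrightarrow\cG_\delta$ of $\psi$. Here the assumption that $Y_\pm$ are \emph{not} isolated is essential: by Remark \ref{rem:WORS3} a wall crossing is in general either an Atiyah flip or a blow-up/blow-down, the latter occurring only at an extremal wall when the adjacent fixed component is a point; excluding isolated sink and source forces each of the $\delta-1$ interior crossings to be an Atiyah flip, as claimed.

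The step I expect to be the main obstacle is the chamber count, i.e.\ showing that $\mu_L$ takes all integer values between $0$ and $\delta$ so that consecutive walls differ by exactly $1$. This is what distinguishes the number of \emph{distinct} geometric quotients from the mere length of the weight interval, and it is not handed over directly by \cite{WORS3}; a missing interior integer weight would make two of the displayed $\cG_k$ coincide and collapse the count below $\delta$. I would settle it through the fixed-point tables of Section \ref{sec:equalGrass}, where the inner components are indexed by consecutive weights, combined with the equalized relation $C\cdot L=\mu_L(Y')-\mu_L(Y)$ for an orbit closure $C$ joining $Y$ to $Y'$, which controls how far apart adjacent components can be. Once the consecutive-weight property is in place, the remaining points — the identifications $\cG_1=\cG_-$, $\cG_\delta=\cG_+$ and the Atiyah-flip nature of the interior crossings — follow from the local Bia{\l}ynicki--Birula picture together with the equalized and non-isolated hypotheses.
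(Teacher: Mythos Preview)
Your proposal is correct and matches the paper's own treatment: the paper does not give an independent proof of this proposition but simply records it as a restatement of \cite[Section~4, Corollary~4.12]{WORS3}, exactly as you outline. Your explicit identification of the one non-formal input (that $\mu_L$ hits every integer in $\{0,\dots,\delta\}$, read off from the tables of Section~\ref{sec:equalGrass}) is a useful clarification the paper leaves implicit; the only minor imprecision is that the role of the non-isolated hypothesis is to guarantee that $\cG_\pm$ have Picard number two (so the movable-cone picture is two-dimensional), while the Atiyah-flip nature of the interior wall crossings then follows from equalization together with the codimension-one isomorphism of the $\cG_k$.
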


In particular, this immediately implies the following:

\begin{corollary}\label{cor:unique}
In the situation of Proposition \ref{prop:unique}, the birational map $\psi:\cG_-\dashrightarrow\cG_+$ is a small $\Q$-factorial modification, uniquely determined by $\cG_{\pm}$. 
\end{corollary}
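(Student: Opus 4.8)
The plan is to deduce both assertions directly from the structure already recorded in Proposition \ref{prop:unique}, invoking only standard facts about Mori dream spaces. First I would dispose of the statement that $\psi$ is a small $\Q$-factorial modification. By Proposition \ref{prop:unique} the quotients $\cG_\pm$ are Mori dream spaces, hence in particular normal $\Q$-factorial projective varieties, and $\psi$ is exhibited there as a finite composition of Atiyah flips $\cG_1\dashrightarrow\cG_2\dashrightarrow\cdots\dashrightarrow\cG_\delta$. Each such flip is an isomorphism in codimension one, and a composition of maps that are isomorphisms in codimension one is again one; therefore $\psi$ is a small birational map between $\Q$-factorial varieties, that is, a small $\Q$-factorial modification.

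The real content is the uniqueness, and here I would exploit that $\cG_\pm$ have Picard number two. Since $\psi$ is an isomorphism in codimension one it induces a canonical identification $N^1(\cG_-)\cong N^1(\cG_+)$ under which the movable cones agree, $\overline{\Mov(\cG_-)}=\overline{\Mov(\cG_+)}$, together with their Mori chamber decompositions. As $N^1$ is two-dimensional, this common movable cone is a planar cone bounded by exactly two extremal rays, and its chamber decomposition is a chain of $\delta$ adjacent two-dimensional subcones. By the last item of Proposition \ref{prop:unique}, $\Nef(\cG_-)$ and $\Nef(\cG_+)$ are precisely the two end chambers of this chain, each containing one of the two boundary rays of $\overline{\Mov(\cG_-)}$.

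Finally I would argue that all of this data is intrinsic to $\cG_-$: for a Mori dream space the movable cone, its chamber decomposition, and the small $\Q$-factorial model attached to each chamber depend only on the variety (equivalently, on its Cox ring, by Hu--Keel), and every such model shares the same Cox ring. The two end chambers single out two distinguished models, namely $\cG_-$ itself and one further model which must be isomorphic to $\cG_+$, and the composition of flips joining these extremal models is the canonical birational map between them. Hence $\psi$ is recovered from $\cG_-$ (equivalently from $\cG_+$) up to the choice of isomorphism with $\cG_+$, i.e. up to automorphisms of $\cG_\pm$, which is the asserted uniqueness. I expect the main obstacle to be precisely this last point: one must ensure that $\Nef(\cG_+)$ is \emph{forced} to be an end chamber rather than merely permitted to be one, since otherwise a pseudo-automorphism of $\cG_-$ could in principle relate $\cG_+$ to an interior chamber and spoil the uniqueness; this is exactly what the condition ``each of the nef cones of $\cG_\pm$ contains an extremal ray of $\Mov(\cG_-)$'' rules out, so no further work beyond citing Proposition \ref{prop:unique} should be needed.
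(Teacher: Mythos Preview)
Your proposal is correct and follows exactly the approach the paper intends: in the paper the corollary is stated with no proof at all, introduced only by ``In particular, this immediately implies the following'', so the authors regard it as a direct consequence of Proposition~\ref{prop:unique} and standard Mori dream space theory, which is precisely what you spell out. Your discussion of the end-chamber issue and of uniqueness up to automorphisms simply makes explicit what the paper leaves to the reader.
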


We will finish the Section with the proofs of Theorem \ref{thm:unbalanced} and Theorem \ref{thm:E7}, which is done by determining the normal bundles of $Y_\pm$ in $X$ in each case. 

%%%%%%%%%%%%%%%%%%%%%%%%%%%%%%%%%%%%%%%%%%%%%%%%%%%%%

%%%%%%%%%%%%%%%%%%%%%%%%%%%%%%%%%%%%%%%%%%%%%%%%%%%%%
\subsection{Proof of Theorem \ref{thm:unbalanced}}\label{ssec:1.2classic} 
%%%%%%%%%%%%%%%%%%%%%%%%%%%%%%%%%%%%%%%%%%%%%%%%%%%%%

%%%%%%%%%%%%%%%%%%%%%%%%%%%%%%%%%%%%%%%%%%%%%%%%%%%%%

By Corollary \ref{cor:unique}, the map $\psi$ is in each case determined by the varieties $\cG_\pm$, which we will describe in each case. We will proceed case-by-case according to Table \ref{tab:balshort}. 
Besides, we have included here some remarks, in which we provide further geometric features of $\psi$.

%%%%%%%%%%%%%%%%%%%%%%%%%%%%%%%%%%%%%%%%%%%%%%%%%%%%%
\subsubsection{$H_n$-action on Grassmannians} \label{ssec:caseA}
%%%%%%%%%%%%%%%%%%%%%%%%%%%%%%%%%%%%%%%%%%%%%%%%%%%%%

We consider the equalized and balanced $H_n$-action on the Grassmannian $\DA_{2n-1}(k)$.
We refer to Section \ref{ssec:An} for the notations and results therein contained.
In particular, given a the $2n$-dimensional vector space $V$, the $H_n$-action provides a decomposition $V=V_- \oplus V_+$ such that $\dim V_\pm=n$.
The extremal fixed-point components of the action on $\DA_{2n-1}(k)$ are:
\begin{equation}\label{eq:sinkSourceGrassEB}
	Y_\pm = \left\{[W] \in \P \left( \bigwedge^k V_\pm\right):W \subset V_\pm\right\}\simeq \DA_{n-1}(k);
\end{equation}
they parametrize $k$-dimensional vector subspaces contained in $V_\pm$, respectively.
We denote by $\cS_\pm,\cQ_\pm$ the universal bundles over $Y_\pm$ of rank $k$ and $n-k$, respectively.

If $k=n$, $\psi$ is a Cremona transformation (see Section \ref{sec:j-str}), which can be explicitly described (see \cite[Section 4]{Thaddeus}, \cite[Section 4]{WORS5}) as the projectivization of the inversion of linear maps. More generally:

\begin{proposition}\label{prop:EBgrass}
	With the notation as above, consider the equalized and balanced $H_n$-action on the Grassmannian $\DA_{2n-1}(k)$. Then $Y_\pm\simeq \DA_{n-1}(k)$ and 
	\[
	\cN_{Y_\pm|\DA_{2n-1}(k)}\simeq \cS_\pm^\vee \otimes V_\mp.
	\]
\end{proposition}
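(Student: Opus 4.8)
The plan is to read the normal bundle off from the standard description of the tangent bundle of the Grassmannian, using the weight decomposition $V=V_-\oplus V_+$. I will carry out the computation for $Y_-$; the statement for $Y_+$ follows verbatim after exchanging the roles of $V_-$ and $V_+$. Recall from Section \ref{ssec:An} and \eqref{eq:sinkSourceGrassEB} that $Y_-\simeq \DA_{n-1}(k)$ parametrizes the $k$-dimensional subspaces $W\subset V_-$. Denoting by $\cS\hookrightarrow V\otimes\cO_X$ the universal subbundle and by $\cQ$ the universal quotient bundle of $X=\DA_{2n-1}(k)$, the identification $T_X\simeq \cS^\vee\otimes\cQ$ from Section \ref{sssec:univBundles} reduces the problem to understanding the restrictions of $\cS$ and $\cQ$ to $Y_-$.

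The key point is that, since every subspace parametrized by $Y_-$ is contained in $V_-$, the restriction $\cS|_{Y_-}$ is exactly the universal subbundle $\cS_-$ of $Y_-$, and it factors as $\cS_-\hookrightarrow V_-\otimes\cO_{Y_-}\subset V\otimes\cO_{Y_-}$. Splitting $V\otimes\cO_{Y_-}=(V_-\otimes\cO_{Y_-})\oplus(V_+\otimes\cO_{Y_-})$ and taking the quotient by $\cS_-$, which is entirely contained in the first summand, I obtain
\[
\cQ|_{Y_-}\simeq \cQ_-\oplus(V_+\otimes\cO_{Y_-}),
\]
where $\cQ_-$ is the universal quotient bundle of $Y_-$. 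Tensoring with $\cS_-^\vee$ then gives $T_X|_{Y_-}\simeq(\cS_-^\vee\otimes\cQ_-)\oplus(\cS_-^\vee\otimes V_+)$.

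It remains to match the first summand with $T_{Y_-}\simeq\cS_-^\vee\otimes\cQ_-$ and the second with the normal bundle; this is the only step requiring genuine care. In the exact sequence $\shse{T_{Y_-}}{T_X|_{Y_-}}{\cN_{Y_-|X}}$ one must check that the inclusion of $T_{Y_-}$ coincides with the inclusion of the summand $\cS_-^\vee\otimes\cQ_-$. This is verified fiberwise: under the canonical isomorphism $T_{[W]}X\simeq\Hom(W,V/W)$ and the decomposition $V/W\simeq(V_-/W)\oplus V_+$, the tangent directions to $Y_-$ are exactly the homomorphisms valued in $V_-/W$, so the quotient is $\Hom(W,V_+)$. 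Passing to bundles yields $\cN_{Y_-|X}\simeq\cS_-^\vee\otimes V_+$, which is the asserted isomorphism $\cN_{Y_\pm|X}\simeq\cS_\pm^\vee\otimes V_\mp$ for the sign $-$. Alternatively, the same answer can be obtained from the general formula of Lemma \ref{lem:normals} by computing the $P^\perp$-module $N_-$ of \eqref{eq:Ns} together with its $H^\perp$-weights.
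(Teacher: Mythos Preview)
Your proof is correct and follows essentially the same route as the paper: restrict the universal bundles $\cS,\cQ$ of $\DA_{2n-1}(k)$ to $Y_\pm$, use $T_X\simeq\cS^\vee\otimes\cQ$ to obtain the splitting $T_X|_{Y_-}\simeq(\cS_-^\vee\otimes\cQ_-)\oplus(\cS_-^\vee\otimes V_+)$, and read off the normal bundle from the exact sequence. Your fiberwise check that the inclusion $T_{Y_-}\hookrightarrow T_X|_{Y_-}$ lands in the first summand is a point the paper leaves implicit, so your argument is in fact slightly more complete.
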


\begin{proof}
In order to compute the normal bundles $\cN_{Y_\pm|\DA_{2n-1}(k)}$, we restrict the universal bundles $\cS,\cQ$ of $\DA_{2n-1}(k)$ on the fixed-point components $Y_\pm$, obtaining:
\begin{align*}
	&\cS|_{Y_\pm}\simeq\cS_\pm, \\
	&\cQ|_{Y_\pm}\simeq\cQ_\pm \oplus \left( V_\mp \otimes \cO_{Y_\pm}\right),
\end{align*}
where $\cS_\pm,\cQ_\pm$ are the universal bundles of $Y_\pm$.
Consider the short exact sequences
\[
\shse{T_{Y_\pm}}{T_{\DA_{2n-1}(k)}|_{Y_\pm}}{\cN_{Y_\pm|\DA_{2n-1}(k)}};
\]
together with the equalities
\begin{align*}
	&T_{Y_\pm} \simeq \cS_\pm^\vee \otimes \cQ_\pm,\\
	&T_{\DA_{2n-1}(k)}|_{Y_\pm} \simeq \left(\cS_\pm^\vee \otimes \cQ_\pm\right) \oplus \left(\cS_\pm^\vee \otimes \left(V_\mp \otimes \cO_{Y_\pm}\right)\right);
\end{align*}
then we get
\[
\cN_{Y_\pm|\DA_{2n-1}(k)} \simeq \cS_\pm^\vee \otimes V_\mp. \qedhere
\]
\end{proof}

\begin{remark}\label{rem:thaddeus}
	Following \cite[Section 4]{Thaddeus}, for every $u\leq k$ we define $\Sec_u(\cS_{\pm},V_\mp)$ to be the variety of secant $(u-1)$-dimensional linear spaces to the relative Segre embedding 
	\[
	\P\left(\cS_{\pm}^\vee\right) \times \P\left(V_\mp\right) \subset \P\left (\cS_{\pm}^\vee \otimes V_\mp\right).
	\]
	Thaddeus describes such a variety as the space of linear maps $f: W_\pm \to  V_\mp$ (where $W_\pm \subset V_\pm$  is a linear subspace of dimension $k$) of rank up to $u \le k$. 

	Our map $\psi$ is then defined on the open set 
	\[
	\P\left (\cS_{\pm}^\vee\otimes V_\mp\right) \setminus \Sec_{k-1}(\cS_{\pm},V_\mp),
	\]
	which corresponds to the linear maps $f: W_\pm \to  V_\mp$ of maximal rank.

	Assume that $f: W_- \to V_+$ is an injective linear map from a $k$-dimensional linear subspace $W_-\subset V_-$ and denote $W_+:=f(W_-)$.
	Consider the inverse $f^{-1}:W_+ \to  W_-$ and compose it with the inclusion $W_-\hookrightarrow V_-$, to a map $\bar f: W_+ \to  V_-$. 
	Then $\psi$ can be described as follows:
	\[
	\psi:\P\left( \cS_{-}^\vee\otimes V_+ \right) \dashrightarrow \P\left( \cS_{+}^\vee \otimes V_- \right), \qquad [f] \longmapsto [\bar f].
	\]
	Note that, if $k=1$, then $\cS_\pm^\vee= \cO_{Y_\pm}(1)$. So $\P(\cO_{Y_\pm}(1) \otimes V_\mp)=\P(V_-) \times \P(V_+)$, and the map $\psi$ is the identity. 
\end{remark}

%%%%%%%%%%%%%%%%%%%%%%%%%%%%%%%%%%%%%%%%%%%%%%%%%%%%%
\subsubsection{$H_n$-action on isotropic Grassmannians}\label{sssec:HnsympOrtho}
%%%%%%%%%%%%%%%%%%%%%%%%%%%%%%%%%%%%%%%%%%%%%%%%%%%%%

We merge the cases of the equalized and balanced $H_n$-actions on the symplectic Grassmannian $X \simeq \DC_n(k)$ or the even orthogonal Grassmannian $X \simeq \DD_n(k)$, with $n$ even if $k=n-1,n$.
As shown in Section \ref{ssec:Cn} and Section \ref{sssec:HnDn}, which we refer for the notations and the results therein, the fixed-point components of the action are RH varieties of type $\DA_{n-1}$.

With the exception of the case $X\simeq \DD_n(n-1)$, $n$ even, the sink and the source of the $H_n$-action on $X$, as in \eqref{eq:sinkSourceGrassEB}, are Grassmannians parametrizing $k$-dimensional linear subspaces of $V_\pm$, respectively:
\[
Y_\pm =\left\{[W] \in \P \left( \bigwedge^k V_\pm\right):W \subset V_\pm\right\} \simeq \DA_{n-1}(k).
\]
As in Section \ref{ssec:caseA}, $Y_\pm$ support two universal bundles, of rank $k$ and $n-k$ respectively, that we denote by $\cS_{\pm},\cQ_{\pm}$. Moreover, the isomorphism $$V_+ \simeq V_-^\vee$$ (see Section \ref{sssec:univBundles}) allows us to identify $Y_\pm$ with the Grassmannians of $(n-k)$-dimensional vector subspaces of $V_\mp$. Then, restricting Diagram \eqref{eq:univBD} to $Y_-$, for instance, we get:
\begin{equation}\label{eq:univBD3}
\xymatrix@R=10mm{\cS_- \ar@{=}[d] \ar[r] & (V_-\otimes \cO_{Y_-})\oplus\cQ_-^\vee \ar[d] \ar[r] & \cK_{|Y_-}  \ar[d] \\ 
\cS_- \ar[r] \ar[d] & (V_-\oplus V_-^\vee) \otimes \cO_{Y_-} \ar[d] \ar[r] &(V_-^\vee \otimes \cO_{Y_-})\oplus \cQ_- \ar[d]\\
0 \ar[r] & \cS_-^\vee \ar@{=}[r] & \cS_-^\vee}
\end{equation} 
so that 
\begin{equation}\label{eq:KQQdual}
\cK_{|Y_-}\simeq \cQ_-\oplus\cQ_-^\vee.
\end{equation}

In the case $X\simeq \DD_n(n-1)$, $n$ even, the sink and the source of the action are projective spaces $\DA_{n-1}(n-1)$, parametrizing $(n-1)$-dimensional subspaces of $V_-$ and $V_+$, respectively. The restriction of Diagram \eqref{eq:univBD} to $Y_-$ provides:
\begin{equation}\label{eq:univBD2}
\xymatrix@R=10mm{\cS_{|Y_-} \ar@{=}[d] \ar@{=}[r] & \cS_{|Y_-} \ar[d] \ar[r] & 0  \ar[d] \\ 
\Omega_{Y_-}(1)\oplus  \cO_{Y_-}(-1) \ar[r] \ar[d] & (V_-\oplus V_-^\vee) \otimes \cO_{Y_-} \ar[d] \ar[r] &\cO_{Y_-}(1)\oplus T_{Y_-}(-1) \ar@{=}[d]\\
0 \ar[r] & \cS_{|Y_-}^\vee \ar@{=}[r] & \cS_{|Y_-}^\vee.}
\end{equation} 

We will study now the normal bundles of $Y_\pm$ in $X$ (which completely determine the induced birational map $\psi: \P \left(\cN_{Y_-|X}\right) \dashrightarrow \P \left( \cN_{Y_+|X}\right)$ by Corollary \ref{cor:unique}).
The case $k=n$ (that is, the case in which $\psi$ is a Cremona transformation) has been explicitly studied in \cite[Sections 4.3 and 4.4]{WORS5} (see also \cite[Appendix 10 and 11]{Thaddeus}) which show that:

\begin{itemize}[leftmargin=*]
	\item If $X \simeq \DC_n(n)$ is a Lagrangian Grassmannian, then the birational map \eqref{eq:birmap} is the inversion of symmetric tensors:
\[
\psi:\P\left(S^2 V_+\right) \dashrightarrow \P\left(S^2 V_-\right)\simeq\P\left(S^2 V_+^\vee\right).
\]
\item If $X \simeq \DD_n(n)$, $n$ even, is a spinor variety, then the birational map \eqref{eq:birmap} is the inversion of skew-symmetric tensors:
\[
\psi:\P\left(\bigwedge^2 V_+\right) \dashrightarrow \P\left(\bigwedge^2 V_-\right)\simeq \P\left(\bigwedge^2 V_+^\vee\right).
\]
\end{itemize}

To describe the normal bundles of $Y_\pm$ in $X$ in the remaining cases, we will consider separately the case in which $X$ parametrizes non-maximal isotropic subspaces of $V$ and the case $X \simeq \DD_n(n-1)$, $n$ even. We start by proving the following. 

\begin{proposition}\label{prop:normalHn}
	With the notation as above, consider the equalized and balanced $H_n$-action on $X \simeq \DC_n(k)$ with $k<n$ or $X \simeq \DD_n(k)$ with $k<n-1$.
	Then $Y_\pm\simeq \DA_{n-1}(k)$ and the normal bundles $\cN_{Y_\pm|X}$ are non-trivial extensions 
	\[
	\shse{\left(\cS_{\pm} \otimes \cQ_{\pm}\right)^\vee}{\cN_{Y_\pm|X}}{\cC_{\pm}},
	\]
	where
	\[
	\cC_{\pm}=\begin{cases}
	S^2\cS_{\pm}^\vee &\mbox{if }X \simeq \DC_{n}(k),\\\bigwedge^2\cS_{\pm}^\vee &\mbox{if }X \simeq \DD_{n}(k).\end{cases}
	\]
\end{proposition}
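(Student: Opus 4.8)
The plan is to mirror the proof of Proposition \ref{prop:EBgrass}: I would extract $\cN_{Y_-|X}$ from the normal bundle sequence
\[
\shse{T_{Y_-}}{T_X|_{Y_-}}{\cN_{Y_-|X}},
\]
feeding into it the restriction to $Y_-$ of the tangent sequence \eqref{eq:tangentBD}. By the balancedness hypothesis, Lemma \ref{lem:balanced} shows that $L_{w_0}$ interchanges $Y_-$ and $Y_+$, so it suffices to treat the sink $Y_-$; the statement for $Y_+$ follows verbatim with the opposite parabolic.

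First I would restrict the relevant universal bundles to $Y_-\simeq\DA_{n-1}(k)$. As in Proposition \ref{prop:EBgrass} one has $\cS|_{Y_-}\simeq\cS_-$ and hence $\cC|_{Y_-}\simeq\cC_-$, while Diagram \eqref{eq:univBD3} together with \eqref{eq:KQQdual} gives $\cK|_{Y_-}\simeq\cQ_-\oplus\cQ_-^\vee$. Restricting the tangent sequence \eqref{eq:tangentBD} to $Y_-$ therefore yields
\[
\shse{(\cS_-^\vee\otimes\cQ_-)\oplus(\cS_-^\vee\otimes\cQ_-^\vee)}{T_X|_{Y_-}}{\cC_-}.
\]
The next step is to identify the tangent subbundle $T_{Y_-}\simeq\cS_-^\vee\otimes\cQ_-$ inside $T_X|_{Y_-}$ with the first summand $\cS_-^\vee\otimes\cQ_-$ of $\cS^\vee\otimes\cK|_{Y_-}$. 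Geometrically, over $[W]\in Y_-$ (with $W\subset V_-$ a $k$-dimensional subspace) the fiber $W^\perp/W$ of $\cK$ splits as $(V_-/W)\oplus W^\circ$, where $W^\circ\subset V_+\simeq V_-^\vee$ is the annihilator of $W$; the first-order deformations of $W$ that remain inside $V_-$, namely the tangent directions to $Y_-$, are exactly $\cS_-^\vee\otimes(V_-/W)\simeq\cS_-^\vee\otimes\cQ_-$. In particular $T_{Y_-}$ lands in the subbundle $\cS^\vee\otimes\cK|_{Y_-}$, so the composite $T_{Y_-}\to T_X|_{Y_-}\to\cC_-$ vanishes. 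Dividing the restricted tangent sequence by $T_{Y_-}$ then produces
\[
\shse{(\cS_-\otimes\cQ_-)^\vee}{\cN_{Y_-|X}}{\cC_-},
\]
since $\cS_-^\vee\otimes\cQ_-^\vee=(\cS_-\otimes\cQ_-)^\vee$, which is the asserted extension.

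The hard part will be establishing that this extension is \emph{non-trivial}, and here I would invoke the homogeneous description of Lemma \ref{lem:normals}. Since $\cN_{Y_-|X}\simeq G^\perp\times^{P^\perp}N_-$, the extension splits as a direct sum if and only if the $P^\perp$-module $N_-$ splits, that is, compatibly with the action of the unipotent radical of $P^\perp$ and not merely with $H^\perp$. Thus non-triviality amounts to exhibiting a nonzero bracket in $\fg$ carrying the weight spaces of the quotient summand $\cC_-$ into those of the sub $(\cS_-\otimes\cQ_-)^\vee$; equivalently, one checks by a Bott-type computation on $\DA_{n-1}(k)$ that $\Ext^1\!\big(\cC_-,(\cS_-\otimes\cQ_-)^\vee\big)$ is one-dimensional and that the class of $\cN_{Y_-|X}$ is its nonzero element, which pins the bundle (hence $\cG_\pm\simeq\P(\cN_{Y_\pm|X})$, and so $\psi$ via Corollary \ref{cor:unique}) down uniquely. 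The two delicate points are therefore the geometric identification of $T_{Y_-}$ with the correct summand and the non-splitting of the $P^\perp$-module $N_-$; everything else is the same bookkeeping as in Proposition \ref{prop:EBgrass}.
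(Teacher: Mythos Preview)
Your approach is essentially the paper's: restrict the tangent sequence \eqref{eq:tangentBD} to $Y_-$, use $\cK|_{Y_-}\simeq\cQ_-\oplus\cQ_-^\vee$, identify $T_{Y_-}$ with the summand $\cS_-^\vee\otimes\cQ_-$, and read off the extension. The only cosmetic difference is that where you argue geometrically that deformations staying in $V_-$ pick out the $\cQ_-$-summand, the paper instead observes that $\Hom(\cQ_-,\cQ_-^\vee)=0$, so the map $\cQ_-\to\cK|_{Y_-}$ has no $\cQ_-^\vee$-component. It is worth noting that the paper's proof does \emph{not} address the non-triviality of the extension at all, whereas you sketch an argument via Lemma~\ref{lem:normals} and the $P^\perp$-module structure of $N_-$; in that respect your outline is more complete than the published proof.
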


\begin{proof}
	Combining the fact that $T_{Y_\pm}\simeq \cS_{\pm}^\vee \otimes \cQ_{\pm}$ with \eqref{eq:tangentBD}, we obtain:
	\begin{equation}\label{eq:normHn}
	\xymatrix{ \cS_{\pm}^\vee \otimes \cQ_{\pm} \ar[d] \ar[r]^\simeq & T_{Y_\pm} \ar[d] & \\
	\cS_{\pm}^\vee \otimes \cK|_{Y_\pm} \ar[d] \ar[r] & T_{X}|_{Y_\pm} \ar[d] \ar[r] & \cC_{\pm} \ar@{=}[d] \\
	\cS_{\pm}^\vee \otimes \cK|_{Y_\pm}/\cQ_{\pm} \ar[r] & \cN_{Y_\pm |X} \ar[r] & \cC_{\pm}.
	}
	\end{equation}
	We conclude by noting that $\cK|_{Y_\pm}/\cQ_{\pm}=\cQ_{\pm}^\vee$, which follows from the fact that $\Hom(\cQ,\cQ^\vee)=0$. 
\end{proof}

Finally we consider the case of the $H_n$-action on $X \simeq \DD_n(n-1)$.

\begin{proposition}\label{prop:lastOne}
	With the notations as above, consider the equalized and balanced $H_n$-action on $\DD_n(n-1)$, $n$ even. Then $Y_\pm\simeq \DA_{n-1}(n-1)$ are projective spaces, and 
	\[
	\cN_{Y_\pm|\DD_n(n-1)}\simeq \bigwedge^2 T_{Y_\pm}(-2).
	\]
\end{proposition}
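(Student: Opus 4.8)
The plan is to read off the normal bundle from the restriction $T_X|_{Y_-}$, exploiting that $X=\DD_n(n-1)$ parametrizes \emph{maximal} isotropic subspaces. First I would recall from Section~\ref{sssec:univBundles} that here $\cK=0$, so $T_X\simeq\bigwedge^2\cS^\vee$ and hence $T_X|_{Y_-}\simeq\bigwedge^2(\cS^\vee|_{Y_-})$. The sink $Y_-\simeq\DA_{n-1}(n-1)$ is a projective space (Section~\ref{sssec:HnDn}), and the third column of Diagram~\eqref{eq:univBD2} gives directly the splitting
\[
\cS^\vee|_{Y_-}\simeq T_{Y_-}(-1)\oplus\cO_{Y_-}(1),
\]
which is moreover the decomposition of $\cS^\vee|_{Y_-}$ into $\C^*$-weight eigenspaces, the two summands carrying distinct weights.

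Next I would expand the exterior square. Since $\bigwedge^2\bigl(T_{Y_-}(-1)\bigr)\simeq\bigwedge^2 T_{Y_-}(-2)$, $\ T_{Y_-}(-1)\otimes\cO_{Y_-}(1)\simeq T_{Y_-}$ and $\bigwedge^2\cO_{Y_-}(1)=0$, this yields
\[
T_X|_{Y_-}\simeq\bigwedge^2 T_{Y_-}(-2)\ \oplus\ T_{Y_-},
\]
again a sum of two distinct $\C^*$-weight eigenspaces. By the Bia{\l}ynicki-Birula theorem the weight-zero part of $T_X|_{Y_-}$ is exactly $T_{Y_-}$, while $\cN_{Y_-|X}$ is the sum of the nonzero-weight eigenspaces. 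To conclude $\cN_{Y_-|X}\simeq\bigwedge^2 T_{Y_-}(-2)$ it therefore suffices to check that the weight-zero summand is the factor $T_{Y_-}$ and not $\bigwedge^2 T_{Y_-}(-2)$; this follows because these two bundles are non-isomorphic, as a quick determinant computation gives $\det T_{Y_-}=\cO_{Y_-}(n)$ whereas $\det\bigl(\bigwedge^2 T_{Y_-}(-2)\bigr)=\cO_{Y_-}(n-2)$.

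For the source I would invoke balancedness: by Lemma~\ref{lem:balanced} the left multiplication $L_{w_0}$ is a biregular automorphism of $X$ conjugating the $H_n$-action to its inverse, so it interchanges $Y_-$ and $Y_+$ and carries $\cN_{Y_-|X}$ (resp.\ $T_{Y_-}$) to $\cN_{Y_+|X}$ (resp.\ $T_{Y_+}$); this gives $\cN_{Y_+|X}\simeq\bigwedge^2 T_{Y_+}(-2)$ at once. Alternatively one repeats the computation for $Y_+\simeq\DA_{n-1}(1)$ verbatim. I expect the only delicate point to be the identification of the weight-zero eigenspace: the two summands of $T_X|_{Y_-}$ have equal rank precisely when $n=4$, so one cannot argue by rank alone and must genuinely use that the summands are weight eigenspaces together with the determinant obstruction above.
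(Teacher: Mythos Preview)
Your proof is correct and coincides with the paper's up to the splitting $T_X|_{Y_-}\simeq T_{Y_-}\oplus\bigwedge^2 T_{Y_-}(-2)$; the two arguments diverge only in identifying which summand is the image of $T_{Y_-}\hookrightarrow T_X|_{Y_-}$. The paper proves the vanishing $\Hom\bigl(T_{Y_-},\bigwedge^2 T_{Y_-}(-2)\bigr)=0$ by rewriting $\bigwedge^2 T_{Y_-}(-3)\simeq\bigwedge^{n-3}\Omega_{Y_-}(n-3)$ and invoking the Bott formula on $\P^{n-1}$; this forces the inclusion to land in the first summand. You instead observe that the direct sum is exactly the $\C^*$-weight splitting (because Diagram~\eqref{eq:univBD2} is induced by $V=V_-\oplus V_+$), so one summand is the Bia{\l}ynicki-Birula weight-zero piece, and your determinant check $\cO(n)\neq\cO(n-2)$ excludes the wrong candidate. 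Your route exploits the torus action directly and avoids the cohomological computation; the paper's route avoids tracking weights. One point worth making explicit: the identification $T_X\simeq\bigwedge^2\cS^\vee$ passes through $\cQ\simeq\cS^\vee$ via the quadratic form $q$, which under the linearization of Section~\ref{sssec:HnDn} is only $\C^*$-semi-invariant (it scales by $t$), so the two summands are weight eigenbundles with \emph{distinct} weights but those weights are globally shifted and need not be $0$ and something nonzero on the nose. This does not affect your argument, since you only use that the splitting is the weight splitting and then compare the two summands to $T_{Y_-}$ abstractly. Your use of $L_{w_0}$ to transport the computation to $Y_+$ is a clean application of balancedness; the paper simply declares that case analogous.
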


\begin{proof}
	We will do the proof in the case of $Y_-$, being the case of $Y_+$ analogous. 
	By \eqref{eq:tangentBD} we have that $T_{\DD_n(n-1)}\simeq \bigwedge^2 \cS^\vee$. Using Diagram \eqref{eq:univBD2} we then get:
	\[
	T_{\DD_n(n-1)}|_{Y_-}\simeq \bigwedge^2 \cS^\vee |_{Y_-}\simeq \bigwedge^2\left( \cO_{Y_-}(1) \oplus T_{Y_-}(-1)\right)\simeq T_{Y_-} \oplus \bigwedge^2T_{Y_-}(-2).
	\]
	Then we have a short exact sequence:
	\begin{equation}\label{eq:splitSequence}
			\shse{T_{Y_-}}{T_{Y_-} \oplus \bigwedge^2 T_{Y_-}(-2)}{\cN_{Y_-|\DD_n(n-1)}}.
	\end{equation}
	We claim that $\Hom(T_{Y_-}, \bigwedge^2 T_{Y_-}(-2))=0$, so that, in particular we conclude that  $\cN_{Y_\pm|\DD_n(n-1)}\simeq \bigwedge^2 T_{Y_\pm}(-2)$. If there was a non-trivial morphism $T_{Y_-} \to \bigwedge^2 T_{Y_-}(-2)$, twisting it with $\cO_{Y_-}(-1)$, we would obtain a non-trivial map
	\[
	T_{Y_-}(-1) \lra \bigwedge^2 T_{Y_-}(-3).
	\]
	Since $T_{Y_-}(-1)$ is globally generated, this would imply that $\HH^0(Y_-,\bigwedge^2 T_{Y_-}(-3))\neq 0$.
	On the other hand, we have that
	\[
	\bigwedge^2 T_{Y_-}(-3) \simeq \bigwedge^{n-3} \Omega_{Y_-}(n-3)
	\]
	and that $H^0(Y_-, \bigwedge^{n-3} \Omega_{Y_-}(n-3))=0$ by Bott formula (cf. \cite[p.8]{OSS}), leading to a contradiction.	
\end{proof}

%%%%%%%%%%%%%%%%%%%%%%%%%%%%%%%%%%%%%%%%%%%%%%%%%%%%%
\subsubsection{$H_1$-action on orthogonal Grassmannians}\label{sssec:H1ortho}
%%%%%%%%%%%%%%%%%%%%%%%%%%%%%%%%%%%%%%%%%%%%%%%%%%%%%

We consider here together the cases of the equalized and balanced $H_1$-actions on the orthogonal Grassmannian $X \simeq \DB_n(k)$ or $X \simeq \DD_n(k)$. 
We refer to Section \ref{ssec:Bn} and Section \ref{sssec:H1Dn} for the notation and preliminary results.
In particular, the fixed-point components are RH varieties of type $\DB_{n-1}$ or $\DD_{n-1}$, respectively (that can be identified with orthogonal Grassmannians of isotropic subspaces in $V_0\subset V$), and the possible pairs $(Y_-,Y_+)$ are:
\begin{align*}
	&\left(\DB_{n-1}(k-1),\DB_{n-1}(k-1)\right) & & \text{if }X \simeq \DB_n(k),\\
	&\left(\DD_{n-1}(k-1),\DD_{n-1}(k-1)\right) & & \text{if }X \simeq \DD_n(k) \text{ with }k \le n-2,\\
	&\left(\DD_{n-1}(n-2),\DD_{n-1}(n-1)\right) & & \text{if }X \simeq \DD_n(n-1),\\
	&\left(\DD_{n-1}(n-1),\DD_{n-1}(n-2)\right) & & \text{if }X \simeq \DD_n(n).
\end{align*}
As orthogonal Grassmannians of $(k-1)$-dimensional isotropic subspaces of $V_0$, $Y_\pm$ support two universal bundles $\cS_{\pm},\cQ_{\pm}$ of rank $k-1$ and $\dim V_0-(k-1)$, respectively, fitting in short exact sequences
\[
\shse{\cS_\pm}{V_0 \otimes \cO_{Y_\pm}}{\cQ_\pm}.
\]

The case $k=1$ gives us the Cremona transformation as described in Section \ref{sec:j-str}; we will provide later a projective description of the map $\psi$ obtained in this case (cf. Remark \ref{rem:H1k1}).
More generally, the following statement holds:

\begin{proposition}\label{prop:normalBD}
	With the notation as above, consider the equalized and balanced $H_1$-action on the orthogonal Grassmannian $X \simeq \DB_n(k)$ or $X \simeq \DD_n(k)$. Then 
	\[
	\cN_{Y_\pm|X}\simeq \cQ_\pm.
	\]
\end{proposition}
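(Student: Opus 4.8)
The plan is to identify the normal bundle fibrewise as a space of homomorphisms out of the fixed line $V_\mp$, and then to recognize the resulting homogeneous bundle as $\cQ_\pm$. I will only treat $Y_-$, the case of $Y_+$ being identical after exchanging the roles of $V_-$ and $V_+$; the orthogonal cases $\DB_n$ and $\DD_n$ run in parallel, since in both one has $\cC=\bigwedge^2\cS^\vee$ in \eqref{eq:tangentBD}. First I would recall the geometry of the sink: a point of $Y_-$ is an isotropic $k$-subspace $W\subset V$ containing $V_-$, and $W\mapsto W_0:=W\cap V_0$ gives the isomorphism $Y_-\simeq\DB_{n-1}(k-1)$ (resp. $\DD_{n-1}(k-1)$), with $W=V_-\oplus W_0$. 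Under this identification the universal bundles of $X$ restrict to $\cS|_{Y_-}\simeq(V_-\otimes\cO_{Y_-})\oplus\cS_-$ and $\cK|_{Y_-}\simeq\cK_-$, where $\cS_-,\cK_-$ are the universal bundles of the isotropic Grassmannian $Y_-$. Since $V_-^\perp=V_-\oplus V_0\supset W$, the quotient $V_-^\perp/W=V_0/W_0$ is exactly the fibre of $\cQ_-$.

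Next I would compute the normal space at $W$. The tangent space $T_{X,W}$ consists of the isotropic first-order deformations $\phi\in\Hom(W,V/W)$, those satisfying $q(\phi w,w')+q(w,\phi w')=0$, and $T_{Y_-,W}$ is the subspace of those with $\phi(V_-)\subset W$, i.e. $\phi|_{V_-}=0$ in $V/W$. Restriction to $V_-$ therefore gives an injection
\[
\cN_{Y_-|X,W}=T_{X,W}/T_{Y_-,W}\hooklongrightarrow \Hom(V_-,V/W).
\]
Evaluating the isotropy condition on the line $V_-$ gives $q(\phi v,v)=0$, so $\phi(V_-)\subset V_-^\perp$ and the image lands in $\Hom(V_-,V_-^\perp/W)=\Hom(V_-,\cQ_-)$. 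A dimension count from \eqref{eq:tangentBD} yields $\codim_X Y_-=\dim V_0-(k-1)=\rank\cQ_-$, so this is an isomorphism of fibres, and globalizing I get an injective morphism of vector bundles $\cN_{Y_-|X}\hooklongrightarrow V_-^\vee\otimes\cQ_-$ of equal rank.

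Finally, since $Y_-\simeq G^\perp/P^\perp$ and $\cN_{Y_-|X}$ is $G^\perp$-homogeneous (Lemma \ref{lem:normals}), this morphism is $G^\perp$-equivariant and hence of constant fibre rank; being a fibrewise isomorphism at one point, it is a global isomorphism. As $G^\perp$ is semisimple it acts trivially on the line $V_-$, so $V_-^\vee\otimes\cO_{Y_-}$ is the trivial line bundle and $\cN_{Y_-|X}\simeq\cQ_-$, as claimed.

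The hard part will be the fibrewise analysis of the isotropy conditions: one must verify that the only constraint they impose on $\phi|_{V_-}$ is the containment $\phi(V_-)\subset V_-^\perp/W$, and that $T_{Y_-,W}$ is exactly the kernel of restriction to $V_-$. Equivalently — and this is the route closer in spirit to Propositions \ref{prop:normalHn} and \ref{prop:lastOne} — one feeds the restrictions $\cS|_{Y_-}\simeq(V_-\otimes\cO_{Y_-})\oplus\cS_-$ and $\cK|_{Y_-}\simeq\cK_-$ into \eqref{eq:tangentBD} for $X$ and for $Y_-$: the summands of $T_X|_{Y_-}$ not accounted for by $T_{Y_-}$ assemble into an extension
\[
\shse{\cK_-}{\cN_{Y_-|X}}{\cS_-^\vee},
\]
which is precisely the right-hand column of the universal diagram \eqref{eq:univBD} of $Y_-$, and therefore equals $\cQ_-$. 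In this formulation the delicate point is to rule out the split extension; this is guaranteed by the homogeneity argument above (or by the explicit fibre identification) rather than by a direct $\Ext$ computation.
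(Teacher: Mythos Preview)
Your proof is correct and takes a genuinely different route from the paper's. The paper proceeds purely representation-theoretically: it invokes Lemma \ref{lem:normals} to write $\cN_{Y_-|X,\,eP^\perp}=N_-=\bigoplus_{\beta\in S}\fg_\beta$, lists the set $S$ explicitly in the Bourbaki basis $\{\epsilon_i\}$, reads off the $H^\perp$-weights via Remark \ref{rem:projectionM(H)}, and then computes the $H^\perp$-weights of $V_0/W$ from the chosen basis $\cB\setminus\{e_0,e_n\}$ of $V_0$; since the two weight multisets agree, the $P^\perp$-modules are isomorphic and the homogeneous bundles coincide.

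Your argument instead identifies the normal space geometrically, by restricting an isotropic first-order deformation $\phi\in T_{X,W}$ to the line $V_-\subset W$ and observing that the only constraint imposed is $\phi(V_-)\subset V_-^\perp/W=V_0/W_0$. This is coordinate-free and closer in spirit to the proofs of Propositions \ref{prop:normalHn} and \ref{prop:lastOne}; it also explains \emph{why} the answer is $\cQ_-$ (it is literally $\Hom(V_-,V_0/W_0)$), whereas the paper's weight-matching verifies the answer without such an interpretation. Conversely, the paper's approach has the advantage of being uniform with the $\DE_7$ case treated in Section \ref{ssec:E7}, where no analogous geometric model is readily available and one falls back on weight computations. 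One small redundancy: once you have a fibrewise isomorphism at every point (your dimension count holds pointwise), the $G^\perp$-equivariance step is not needed to conclude the bundle isomorphism; and the twist by $V_-^\vee$ is harmless already as an abstract line, independently of the $G^\perp$-action.
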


\begin{proof}
	We will prove the statement for the sink $Y_-$, being the case of $Y_+$ analogous. 
	We start by writing $Y_-$ as the RH $G^\perp$-variety $G^\perp/P^\perp$ (Section \ref{sssec:fpc}); then, since both $\cN_{Y_-|X}$ and $\cQ_{-}$ are homogeneous $G^\perp$-bundles (Lemma \ref{lem:normals}), it is enough to show that their fibers at $eP^\perp$ are isomorphic as $P^\perp$-modules. 
	
	On one hand, Lemma \ref{lem:normals} tells us that
	\[
	\cN_{Y_-|X,\, eP^\perp}=N_-:=\bigoplus_{\beta \in S} \fg_\beta
	\]
	where $S=\left\{\beta \in \Phi^-: \sigma_1(\beta)<0,\, \sigma_k(\beta)<0\right\}$. Using the notation of Bourbaki (cf. \cite[Planche II,IV]{Bourb}), there exists a basis $\{\epsilon_i,\,\,i=1,\dots n\}$ of $\Z\Phi\otimes_\Z\R$ such that the positive simple roots of $\fg$ can be written as:
	\[
		\alpha_i=\epsilon_i-\epsilon_{i+1} \text{ for } i<n \text{ and }\alpha_n= \begin{cases}
 		\epsilon_n & \text{if }\fg=\DB_{n}, \\
 		\epsilon_{n-1}+\epsilon_n & \text{if }\fg=\DD_{n},
	\end{cases}
	\]
	hence we can rewrite $S$ as
	\[
	S=\left\{-\epsilon_1+\epsilon_j\right\}_{j=k+1}^n \cup \left\{-\epsilon_1-\epsilon_j\right\}_{j=2}^n \cup 
	\begin{cases}
 	\left\{-\epsilon_1\right\} & \text{if }\fg=\DB_{n} \\
 	\emptyset & \text{if }\fg=\DD_{n}
	\end{cases}
	\]
	in the case $X \not\simeq \DD_n(n-1)$ and 
	\[
	S=\left\{-\epsilon_1+\epsilon_n\right\} \cup \left\{-\epsilon_1-\epsilon_j\right\}_{j=2}^{n-1}.
	\]
	otherwise.

	Since the first fundamental weight $\omega_1$ is $\epsilon_1$, by Remark \ref{rem:projectionM(H)} the $H^\perp$-weights of the action are 
	\[
	\left\{\epsilon_j\right\}_{j=k+1}^n \cup \left\{-\epsilon_j\right\}_{j=2}^n \cup 
	\begin{cases}
 	\{0\} & \text{if }\fg=\DB_{n} \\
 	\emptyset & \text{if }\fg=\DD_{n}
 	\end{cases}
	\]
	when $X \not\simeq \DD_n(n-1)$ and 
	\[
		\left\{\epsilon_n\right\} \cup \left\{-\epsilon_j\right\}_{j=2}^{n-1}
	\]
	otherwise.
	
	On the other hand, consider the basis $\cB=\{e_i\}$ of $V$ introduced in Sections \ref{ssec:Bn} and Section \ref{ssec:Dn}. Then $\cB \setminus \left\{e_0,e_n\right\}$ is a basis for $V_0$; in particular, for $i <n$, the $H^\perp$-weight of $e_i$ is $\epsilon_{i+1}$, the $H^\perp$-weight of $e_{n+i}$ is $-\epsilon_{i+1}$ and, if $\fg=\DB_n$, the $H^\perp$-weight of $e_{2n}$ is $0$.
	We denote
	\[
	V_0 \supset W:=\begin{cases}\langle e_1,\ldots,e_{k-1}\rangle & \text{if }X\not\simeq \DD_{n}(n-1),\\
		\langle e_1,\ldots,e_{n-2},e_{2n-1}\rangle & \text{if } X\simeq \DD_{n}(n-1),
	\end{cases}
	\]
	we have that 
	\[
	\cQ_{-}=G^\perp \times^{P^\perp} V_0/W.
	\]
Then a straightforward computation shows that the decomposition of $V_0/W$ on $H^\perp$-eigenspaces is the same as the one obtained above. 
\end{proof}

\begin{remark}\label{rem:H1k1}
In the case $k=1$, the induced map $\psi$ is a Cremona transformation determined by the inversion in a Jordan algebra (see Section \ref{sec:j-str}). 
Furthermore, $\psi:\P(\fg_-)\to\P(\fg_+)$ is a linear isomorphism that can be projectively described as follows. 
The symmetric bilinear form defining the Lie algebra $\fg$ determines a quadric $Q=\cD(1)$ ($\cD=\DB_n$ or $\DD_n$) in the projectivization of the standard representation $V=V(\omega_1)$ of $\fg$. 
Then $\fg_\pm$ correspond to the tangent spaces of $Q$ at sink and source $Y_\pm$, which are isolated points. 
Consider a tangent direction $[v_-]\in \P(\fg_-)$, it determines a line $\ell_-$ passing by $Y_-$.
The plane $\ell_-+Y_+$ intersects $Q$ on a ($\C^*$-invariant) conic, which is smooth at $Y_+$; the tangent direction to the conic at $Y_+$ is $\psi([v_-])$.
\end{remark}

Let us now describe projectively also the cases in which $k>1$.

\begin{remark}\label{rem:BD}
In the cases in which $X$ is a spinor variety, i.e. $X \simeq \DB_n(n), \DD_n(n-1),\DD_n(n)$, there are no inner fixed-point components, hence the induced birational transformation $\psi$ in the statement above is an isomorphism.   
In the remaining cases, $\psi:\P( \cQ_{-}) \dashrightarrow \P( \cQ_{+})$ is an Atiyah flip (cf. \cite{WORS1, WORS3}), that can be described projectively as follows.

Again, we identify $Y_\pm $ with two orthogonal Grassmannians of $(k-1)$-dimensional isotropic subspaces of $V_0$. Then the bundle $\P\left(\cQ_{-}\right)$ can be identified with the family of pairs $(W_-,W_0)$, where $W_-\in Y_-$, and $W_0\subset V_0$ is a subspace of dimension $k$ containing $W_-$. 
A similar description applies to $\P \left(\cQ_+\right)$. 

Given two pairs $(W_-,W_0)\in \P \left(\cQ_-\right)$, $(W_+,W_0')\in \P \left(\cQ_+\right)$, we have that $\psi$ sends $(W_-,W_0)$ to $(W_+,W_0')$ if $W_0=W_0'$ and the set of isotropic vectors of $W_0$ is precisely $W_-\cup W_+$.
The maps $\psi$, $\psi^{-1}$ are not defined, respectively, in the sets
\[
\Lambda_\pm:=\left\{(W_\pm,W_0)\in \P(\cQ_{\pm}):W_0\subset V_0 \mbox{ isotropic}\right\}\subset \P(\cQ_{\pm}).
\]
Denote $Z\simeq \DA_{\dim V_0-1}(k)$, the Grassmannian of $k$-dimensional subspaces of $V_0$. We have an obvious (small) contraction $\P (\cQ_\pm)\to Z$, sending $(W_\pm,W_0)$ to $W_0$; the image of $\Lambda_\pm$ via this contraction, that we denote by $\Lambda \subset Z$, is the orthogonal Grassmannian of isotropic $k$-dimensional subspaces of $V_0$. Then the map $\psi$ is precisely a flip for these two contractions:
\[
\xymatrix@R=6mm@C=20pt{
\P(\cQ_-)\ar@/^2pc/@{-->}[rrrr]_\psi \ar[rrdd]_{\text{\tiny small}} &\,\,\Lambda_- \ar@{_(->}[l]\ar[rd]& &\Lambda_+ \,\,\ar@{^(->}[r]\ar[ld]& \P(\cQ_+).\ar[lldd]^{\text{\tiny small}}\\
& &\Lambda \ar@{^(->}+<0pt,-12pt>;[d] & &\\
& &Z& &
}
\]
\end{remark}

%%%%%%%%%%%%%%%%%%%%%%%%%%%%%%%%%%%%%%%%%%%%%%%%%%%%%

%%%%%%%%%%%%%%%%%%%%%%%%%%%%%%%%%%%%%%%%%%%%%%%%%%%%%
\subsection{Proof of Theorem \ref{thm:E7}}\label{ssec:E7} 
%%%%%%%%%%%%%%%%%%%%%%%%%%%%%%%%%%%%%%%%%%%%%%%%%%%%%

%%%%%%%%%%%%%%%%%%%%%%%%%%%%%%%%%%%%%%%%%%%%%%%%%%%%%

To complete the description of the birational maps associated to a short and balanced grading, we are left with the case of $\DE_7$ and the grading given by the height map $\sigma_7$. The corresponding $H_7$-action on the variety $\DE_7(7)$ has isolated sink and source, and the associated birational map $\psi:\P(T_{\DE_7(7),Y_-})\dashrightarrow \P(T_{\DE_7(7),Y_+})$ is a Cremona transformation $\P^{26}\dashrightarrow \P^{26}$. Following \cite[Section~8]{WORS1}, this is the special Cremona transformation whose exceptional locus is the Cartan variety $\DE_6(6)\simeq \DE_6(1)$, which is the $16$-dimensional Severi variety. 

Let us now consider the induced $H_7$-action on varieties $\DE_7(k)$, $k<7$. As in the cases of classical type, the induced birational map is completely determined by the normal bundles $\cN_{Y_\pm|\DE_7(k)}$ (cf. Corollary \ref{cor:unique}). 
The extremal fixed-point components of the action $Y_{\pm}\subset \DE_7(k)$, which are RH $\DE_6$-varieties (see Table \ref{tab:E7} below), can be obtained by applying directly the arguments in Section \ref{sssec:fpc}. 
\begin{table}[h!!]
\begin{tabular}{|c||c|c|c|c|}
\hline
$k$&$Y_-$&$Y_+$&$\dim(Y_{\pm})$&$\rank(N_{Y_{\pm},\DE_7(k)})$\\\hline\hline
$1$&$\DE_6(1)$&$\DE_6(6)$&$16$&$17$\\\hline
$2$&$\DE_6(2)$&$\DE_6(2)$&$21$&$21$\\\hline
$3$&$\DE_6(3)$&$\DE_6(5)$&$25$&$22$\\\hline
$4$&$\DE_6(4)$&$\DE_6(4)$&$29$&$24$\\\hline
$5$&$\DE_6(5)$&$\DE_6(3)$&$25$&$25$\\\hline
$6$&$\DE_6(6)$&$\DE_6(1)$&$16$&$26$\\\hline
\end{tabular}
\caption{Extremal fixed-point components of the $H_7$-action in $\DE_7(k)$, $k<7$.\label{tab:E7}}
\end{table}

Note that in each case, the markings of the Dynkin diagram $\DE_6$ for the sink and the source are symmetric with respect to the nontrivial automorphism of the $\DE_6$ diagram (cf. Remark \ref{rem:Y-Y+marked}) that we denote by $s$:
\[
s(1)=6,\quad s(2)=2,\quad s(3)=5,\quad s(4)=4,\quad s(5)=3,\quad s(6)=1.
\]

\begin{notation}
	 In order to include the case of $\DE_7(7)$, we define $s(7):=0$.
\end{notation}

The variety $\DE_6(6)$ can be embedded via $L$, the generator of its Picard group, into the projectivization of the ($27$-dimensional) representation $V(\omega_6)$, so that $V(\omega_6)^\vee=H^0\left(\DE_6(6),L\right)$. 
On the other hand, for every $k\le 6$, the variety $\DE_6(k)$ can be thought of as a family of projective spaces in $\P(V(\omega_6))$. 
In fact, for $k=6$ this is given simply by the embedding $\DE_6(6)\subset \P\left(V(\omega_6)\right)$. For $k\neq 1,6$ the natural projections of $\DE_6$-varieties
\[
\xymatrix{ & \DE_6(k,6) \ar[ld]_{p_k} \ar[rd] \ar[rrd]^{q_k} && \\\DE_6(k) & & \DE_6(6)\ar@{^(->}[r]&\P(V(\omega_6))}
\]
present $\DE_6(k)$ as a family of $\P^5$'s, $\P^4$'s, $\P^2$'s, and $\P^1$'s, in the cases $k=2,3,4,5$, respectively. 
Finally, the variety $\DE_6(1)$ parametrizes smooth $8$-dimensional quadrics in $\DE_6(6)$; considering the linear span of these quadrics, we may also think of $\DE_6(1)$ as the parameter space of a family of ($9$-dimensional) projective subspaces of $\P(V(\omega_6))$:
\[
\xymatrix{ & \DE_6(1,6) \ar[ld]_{p_1} \ar[rrd]^{q_1} && \\\DE_6(1) & & &\P(V(\omega_6)).}
\]
These families of ($9,5,4,2,1$-dimensional) projective subspaces of $\P\left(V(\omega_6)\right)$ are the projectivizations of the following $\DE_6$-homogeneous vector bundles:
\[
\cS_k:={p_k}_*q_k^*\cO_{\P(V(\omega_6))}(1).
\]
This is a subbundle of the trivial vector bundle $V(\omega_6) \otimes\cO_{\DE_6(k)}$, and we denote the corresponding cokernel:
\[
\cQ_k:=\left(V(\omega_6) \otimes\cO_{\DE_6(k)}\right)/\cS_k.
\]

Analogously, if we start with $\DE_6(1) \subset \P(V(\omega_1))$, then we can describe the other $\DE_6$-homogeneous varieties of Picard number one as families of projective spaces in $\P\left(V(\omega_1)\right)$:
	\[
\xymatrix{ & \DE_6(1,k) \ar[ld]_{p'_k} \ar[rrd]^{q'_k} &&\\\DE_6(k) & & &\P\left(V(\omega_1)\right).}
\]
In particular, we have a homogeneous vector bundle $\cS'_k:=p'_{k*}{q'^*_k}
\cO_{\P(V(\omega_1))}(1)$ over $\DE_6(k)$ 
which is subbundle of the trivial vector bundle $V(\omega_1) \otimes \cO_{\DE_6(k)}$, and we denote the corresponding cokernel as $\cQ'_k$.

\begin{proposition}
	With the notations as above, consider the equalized and balanced $H_7$-action on $\DE_7(k)$. Then $Y_-\simeq\DE_6(k)$, $Y_+\simeq\DE_6(s(k))$, and 
	$$
	\cN_{Y_-|\DE_7(k)}=\cQ_k,\qquad \cN_{Y_+|\DE_7(k)}=\cQ'_{s(k)}.
	$$
\end{proposition}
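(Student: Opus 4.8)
The plan is to imitate the strategy of the classical cases (Propositions \ref{prop:EBgrass}, \ref{prop:normalHn} and \ref{prop:normalBD}): exhibit both the normal bundle $\cN_{Y_-|\DE_7(k)}$ and the bundle $\cQ_k$ as homogeneous $G^\perp$-bundles over $Y_-$, and then prove the isomorphism by comparing their fibres at the base point $eP^\perp$ as $P^\perp$-modules. I would carry out $Y_-$ in full; the case of $Y_+$ is entirely parallel, with $P^\perp$, $V(\omega_6)$ and $\sigma_k$ replaced by $P^\perp_{w_0}$, $V(\omega_1)$ and $\sigma_{s(k)}$.

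First I would settle the identification of the extremal fixed-point components. By Section \ref{sssec:fpc} the sink and source of the $H_7$-action are the $G^\perp$-homogeneous varieties $Y_-=G^\perp/P^\perp$ and $Y_+=G^\perp/P^\perp_{w_0}$, where $G^\perp$ is obtained by deleting the node $7$, so that $\cD^\perp=\DE_6$. Since $w_0$ acts as $-\id$ on $\fe_7$ while the longest element of the Weyl group of $\DE_6$ acts as $-s$ (with $s$ the nontrivial diagram automorphism, as in Lemma \ref{lem:balshort}), $P^\perp_{w_0}$ is the parabolic of $G^\perp$ opposite to $P^\perp$; hence $Y_-\simeq\DE_6(k)$ and $Y_+\simeq\DE_6(s(k))$, as recorded in Table \ref{tab:E7}.

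For the normal bundle, Lemma \ref{lem:normals} already presents $\cN_{Y_-|\DE_7(k)}\simeq G^\perp\times^{P^\perp}N_-$, with fibre
\[
N_-=\bigoplus_{\substack{\beta\in\Phi^+:\\\sigma_7(\beta)>0\\\sigma_k(\beta)>0}}\fg_{-\beta}
\]
and $H^\perp$-weights $-\imath^*\beta$ (Remark \ref{rem:projectionM(H)}). The bundle $\cQ_k$ is, by its construction, a homogeneous $G^\perp$-bundle over $Y_-$, so it suffices to match the two fibres as $P^\perp$-modules. The first step here is to identify $V(\omega_6)$ with $\fg_-=\bigoplus_{\sigma_7(\beta)>0}\fg_{-\beta}$ as $\DE_6$-representations: this is the $27$-dimensional module realising the tangent space of $\DE_7(7)$ at its isolated sink, whose projectivisation contains the Severi variety $\DE_6(6)$ (cf. Section \ref{ssec:E7}). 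Under this identification $V(\omega_6)\otimes\cO_{Y_-}$ becomes $\fg_-\otimes\cO_{Y_-}$, and the statement reduces to showing that the subbundle $\cS_k$ corresponds to
\[
\cS_{k,eP^\perp}=\bigoplus_{\substack{\beta\in\Phi^+:\\\sigma_7(\beta)>0\\\sigma_k(\beta)=0}}\fg_{-\beta}\subset\fg_-.
\]
A direct check, using that every positive root $\gamma$ of $\fg^\perp$ satisfies $\sigma_7(\gamma)=0$ and $\sigma_k(\gamma)\ge 0$, shows that this subspace is $\fp^\perp$-stable and that the quotient $\fg_-/\cS_{k,eP^\perp}$ is precisely $N_-$; granting the identification of $\cS_k$, this yields $\cN_{Y_-|\DE_7(k)}\simeq\cQ_k$.

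The main obstacle is exactly this last identification: one must check that the linear span in $\P(V(\omega_6))$ of the projective subspace parametrised by the $P^\perp$-fixed point $eP^\perp\in\DE_6(k)$ coincides with the $\sigma_k=0$ part of $\fg_-$ displayed above. I would establish this by comparing $H^\perp$-weights, as in the proof of Proposition \ref{prop:normalBD}: the weights of $\cS_{k,eP^\perp}$ are $\{-\imath^*\beta:\sigma_7(\beta)>0,\ \sigma_k(\beta)=0\}$, and these must be matched with the weights occurring in the distinguished fibre of $p_k$, a rank count (consistent with Table \ref{tab:E7}, where $\rank\cS_k=27-\rank N_-$) confirming that no further weights appear. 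Once this matching is in place the comparison of fibres is complete, and the case of $Y_+$ follows verbatim: there $V(\omega_1)\simeq\fg_+$, the subbundle $\cS'_{s(k)}$ corresponds to the $\sigma_{s(k)}=0$ part of $\fg_+$, and its cokernel $\cQ'_{s(k)}$ matches $N_+$.
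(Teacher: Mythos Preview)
Your proposal is correct and follows essentially the same route as the paper: both reduce to showing that the fibres of $\cN_{Y_-|\DE_7(k)}$ and $\cQ_k$ at $eP^\perp$ are isomorphic as $P^\perp$-modules, and both verify this by matching $H^\perp$-weights (the paper simply defers that last computation to {\tt SageMath}). Your treatment is in fact a bit more informative, since the explicit identification $V(\omega_6)\simeq\fg_-$ and the description of $\cS_{k,eP^\perp}$ as the $\sigma_k=0$ part of $\fg_-$ explain structurally why the weight comparison must come out right, rather than leaving it as a machine check.
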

\begin{proof}[Sketch of the proof]
The proof is analogous to the one in Proposition \ref{prop:normalBD}. The homogeneous bundles $\cQ_k$, $\cN_{Y_-|\DE_7(k)}$ (respectively $\cQ'_{s(k)}$, $\cN_{Y_+|\DE_7(k)}$) are completely determined by their fibers at a point, which are $P^\perp$-modules. One can check that these modules are isomorphic by computing their $H^\perp$-weights, by using, for instance, by using {\tt SageMath} software. 
\end{proof}

\bibliographystyle{plain}
\bibliography{bibliomin}

\begin{thebibliography}{10}

\bibitem{BB}
Andrzej Bia{\l}ynicki-Birula.
\newblock Some theorems on actions of algebraic groups.
\newblock {\em Ann. of Math. (2)}, 98:480--497, 1973.

\bibitem{BT}
Armand Borel and Jacques Tits.
\newblock Groupes r\'{e}ductifs.
\newblock {\em Inst. Hautes \'{E}tudes Sci. Publ. Math.}, 27:55--150, 1965.

\bibitem{Bourb}
Nicolas Bourbaki.
\newblock {\em \'{E}l\'ements de math\'ematique. {F}asc. {XXXIV}. {G}roupes et
  alg\`ebres de {L}ie. {C}hapitre {IV}: {G}roupes de {C}oxeter et syst\`emes de
  {T}its. {C}hapitre {V}: {G}roupes engendr\'es par des r\'eflexions.
  {C}hapitre {VI}: syst\`emes de racines}.
\newblock Actualit\'es Scientifiques et Industrielles, No. 1337. Hermann,
  Paris, 1968.

\bibitem{CARRELL}
James~B. Carrell.
\newblock Torus actions and cohomology.
\newblock In {\em Algebraic quotients. {T}orus actions and cohomology. {T}he
  adjoint representation and the adjoint action}, volume 131 of {\em
  Encyclopaedia Math. Sci.}, pages 83--158. Springer, Berlin, 2002.

\bibitem{Fra1}
Alberto Franceschini.
\newblock Minimal bandwidth for $\mathbb{C}^*$-actions on generalized
  grassmannians.
\newblock {\em Rendiconti del Circolo Matematico di Palermo Series 2}, pages
  1--20, 2022.

\bibitem{Huy}
Daniel Huybrechts.
\newblock Stability structures on {L}ie algebras, after {K}ontsevich and
  {S}oibelman.
\newblock May 2009.

\bibitem{kon}
Jerzy Konarski.
\newblock The {B}-{B} decomposition via {S}umihiro's theorem.
\newblock {\em J. Algebra}, 182(1):45--51, 1996.

\bibitem{LM}
Joseph~M. Landsberg and Laurent Manivel.
\newblock On the projective geometry of rational homogeneous varieties.
\newblock {\em Comment. Math. Helv.}, 78(1):65--100, 2003.

\bibitem{MMW}
Mateusz Micha{\l}ek, Leonid Monin, and Jaros{\l}aw~A. Wi\'sniewski.
\newblock Maximum likelihood degree and space of orbits of a
  $\mathbb{C}^*$-action.
\newblock {\em SIAM J. Appl. Algebra Geometry}, 1(5):60--85, 2021.

\bibitem{WORS5}
Gianluca Occhetta, Eleonora~A. Romano, Luis~E Sol{\'a}~Conde, and
  Jaros{\l}aw~A. Wi{\'s}niewski.
\newblock Rational homogeneous spaces as geometric realizations of birational
  transformations.
\newblock {\em Preprint ArXiv:2112.15130}, 2021.

\bibitem{WORS1}
Gianluca Occhetta, Eleonora~A. Romano, Luis~E. Sol{\'a}~Conde, and
  Jaros{\l}aw~A. Wi{\'s}niewski.
\newblock Small bandwidth $\mathbb{C}^*$-actions and birational geometry.
\newblock {\em To appear in J. Algebraic Geom.}, 2022.

\bibitem{WORS3}
Gianluca Occhetta, Eleonora~A. Romano, Luis~E. Sol{\'a}~Conde, and
  Jaros{\l}aw~A. Wi{\'s}niewski.
\newblock Small modifications of {M}ori dream spaces arising from
  $\mathbb{C}^*$-actions.
\newblock {\em Eur. J. Math.}, pages 1--33, 2022.

\bibitem{OSS}
Christian Okonek, Michael Schneider, and Heinz Spindler.
\newblock {\em Vector bundles on complex projective spaces}.
\newblock Progress in Mathematics, 3. Birkh\"auser, Boston, Mass., 1980.

\bibitem{ReidToric}
Miles {Reid}.
\newblock {Decomposition of toric morphisms.}
\newblock {Arithmetic and geometry, Pap. dedic. I. R. Shafarevich, Vol. II:
  Geometry, Prog. Math. 36, 395-418 (1983).}, 1983.

\bibitem{Spr73}
Tonny~A. Springer.
\newblock {\em Jordan algebras and algebraic groups}.
\newblock Classics in Mathematics. Springer-Verlag, Berlin, 1998.
\newblock Reprint of the 1973 edition.

\bibitem{Tev05}
Evgueni~A. Tevelev.
\newblock {\em Projective duality and homogeneous spaces}, volume 133 of {\em
  Encyclopaedia of Mathematical Sciences}.
\newblock Springer-Verlag, Berlin, 2005.

\bibitem{Thaddeus1996}
Michael Thaddeus.
\newblock Geometric invariant theory and flips.
\newblock {\em J. Amer. Math. Soc.}, 9(3):691--723, 1996.

\bibitem{Thaddeus}
Michael Thaddeus.
\newblock Complete collineations revisited.
\newblock {\em Math. Ann.}, 315(3):469--495, 1999.

\bibitem{Wlodarczyk}
Jaros{\l}aw W{\l}odarczyk.
\newblock Birational cobordisms and factorization of birational maps.
\newblock {\em J. Algebraic Geom.}, 9(3):425--449, 2000.

\end{thebibliography}
\end{document}